\newtheorem{definition}{Definition}[section]
\newtheorem{theorem}{Theorem}[section]
\newtheorem{lemma}{Lemma}[section]
\newtheorem*{maintheorem*}{Main Theorem}
\numberwithin{equation}{section}
\newcommand{\norm}[1]{\left\| #1 \right\|}
\newcommand{\eps}{\varepsilon}
\newcommand{\eb}{{\eps,\beta}}
\newcommand{\ueb}{u_\eb}
\newcommand{\pt}{\partial_t}
\newcommand{\px}{\partial_x }
\newcommand{\pxx}{\partial_{xx}^2}
\newcommand{\pxxx}{\partial_{xxx}^3}
\renewcommand{\i}{\ifmmode\mathit{\mathchar"7010 }\else\char"10 \fi}
\renewcommand{\j}{\ifmmode\mathit{\mathchar"7011 }\else\char"11 \fi}
\newcommand{\R}{\mathbb{R}}
\newcommand{\N}{\mathbb{N}}
\newcommand{\supp}{\mathrm{supp}\,}
\begin{document}\large

\title[Ibragimov-Shabat equation]{A singular limit problem\\  for the Ibragimov-Shabat equation}

\author[G. M. Coclite and L. di Ruvo]{Giuseppe Maria Coclite and Lorenzo di Ruvo}
\address[Giuseppe Maria Coclite and Lorenzo di Ruvo]
{\newline Department of Mathematics,   University of Bari, via E. Orabona 4, 70125 Bari,   Italy}
\email[]{giuseppemaria.coclite@uniba.it, lorenzo.diruvo@uniba.it}
\urladdr{http://www.dm.uniba.it/Members/coclitegm/}

\date{\today}

\keywords{Singular limit, compensated compactness, Ibragimov-Shabat equation, entropy condition.}

\subjclass[2000]{35G25, 35L65, 35L05}


\thanks{The authors are members of the Gruppo Nazionale per l'Analisi Matematica, la Probabilit\`a e le loro Applicazioni (GNAMPA) of the Istituto Nazionale di Alta Matematica (INdAM)}

\begin{abstract}
We consider the Ibragimov-Shabat equation, which contains nonlinear dispersive effects. We prove
that as the diffusion parameter tends to zero, the solutions of the dispersive equation converge to discontinuous weak solutions of a scalar conservation law. The proof relies on deriving suitable a priori estimates together with an application of the compensated compactness method in the $L^p$ setting.
\end{abstract}

\maketitle


\section{Introduction}
\label{sec:intro}

B\"acklund transformations have been useful in the calculation of soliton solutions of certain nonlinear evolution
equations of physical significance \cite{BD, La, Rs1, Rs2} restricted to one space variable $x$ and a time coordinate $t$.
The classical treatment of the surface transformations, which provide the origin of B\"acklund theory, was developed in \cite{G}.
B\"acklund transformations are local geometric transformations, which construct from a given surface of constant Gaussian curvature $-1$
a two parameter family of such surfaces. To find such transformations, one needs to solve a system of compatible
ordinary differential equations \cite{ET}.

In \cite{KCS, KCS1}, the authors used the notion of  differential equation for a function $u(t,x)$ that describes a pseudo-spherical
surface, and they derived some B\"acklund transformations for  nonlinear evolution equations which are the integrability condition $sl(2,R)-$ valued linear problems \cite{CKI, ACKS,EHK, CKSS,Rs2}.

In \cite{KW}, the authors had derived some B\"acklund transformations for nonlinear evolution
equations of the AKNS class. These transformations explicitly express the new solutions in terms of the known solutions of the  nonlinear evolution
equations and corresponding wave functions which are solutions of the associated Ablowitz-Kaup-Newell-Segur (AKNS) system \cite{AKNS,ZS}.

In \cite{KCS2}, the authors used B\"acklund transformations derived in \cite{KCS,KCS1} in the construction of exact soliton solutions for some nonlinear evolution equations describing pseudo-spherical surfaces which are beyond the AKNS class. In particular, they analyzed the following equation \cite{BRT}:
\begin{equation}
\label{eq:ra1}
\px\left(\pt u +\alpha  g(u)\px u +\beta\px u\right)=\gamma g'(u), \quad \alpha,\,\beta,\,\gamma\in\R,
\end{equation}
where $g(u)$ is any solution of the linear ordinary differential equation
\begin{equation}
\label{eq:ra2}
g''(u)+\mu g(u)=\theta, \quad \mu,\,\theta\in\R.
\end{equation}
\eqref{eq:ra1} include the sine-Gordon, sinh-Gordon and Liouville equations, in correspondence of $\alpha=0$.

In \cite{RA}, Rabelo proved that the system of the equations \eqref{eq:ra1} and \eqref{eq:ra2} describes pseudo-spherical surfaces and possesses a zero-curvature representation with a parameter.

In \cite{Cd1}, the authors investigated the well-posedness in classes of discontinuous functions of \eqref{eq:ra1}, when $\alpha=-1,\, \beta=0,\, \mu=0,\,  \theta=1$.

Moreover, in \cite{Cd2}, the authors investigated the well-posedness in classes of discontinuous functions of \eqref{eq:ra1}, when $\alpha=1,\, \beta=0,\, \mu=-1,\, \theta=1,\, \gamma=-1$.

One more equation, that describes pseudo-spherical surface, is the following one \cite{SEG}:
\begin{equation}
\label{eq:Ise10}
\pt \ueb-\frac{3}{5}\px (\ueb^5)+\beta\eps\pxxx u=3\beta\eps\ueb^2\pxx\ueb-9\ueb(\px\ueb)^2,
\end{equation}
which is the Ibraginov-Shabat equation.
Following \cite{CdREM,CK}, we consider the following diffusive approximation of \eqref{eq:Ise10}
\begin{equation}
\label{eq:Ise}
\pt \ueb-\frac{3}{5}\px (\ueb^5)+\beta \pxxx u=3\beta\ueb^2\pxx\ueb-9\beta\ueb(\px\ueb)^2 +\eps\pxx\ueb,
\end{equation}
where $\beta$ is the dispersive parameter.

We consider the initial value problem for \eqref{eq:Ise}, so we augment \eqref{eq:Ise} with the
initial condition
\begin{equation}
u(0,x)=u_{0}(x),
\end{equation}
on which we assume that
\begin{equation}
\label{eq:u-0}
u_{0}\in L^2(\R)\cap L^{10}(\R).
\end{equation}
We are interested in the no high frequency limit, i.e., we send $\beta\to 0$ in \eqref{eq:Ise}. In this way, we pass from \eqref{eq:Ise} to
\begin{equation}
\label{eq:con-law}
\begin{cases}
\pt u-\frac{3}{5}\px (u^5)=0, \quad& t>0,\,x\in\R,\\
u(0,x)=u_{0}(x),\quad & x\in\R,
\end{cases}
\end{equation}
which is a scalar conservation law.

We study the dispersion-diffusion limit for \eqref{eq:Ise}. Therefore, we fixe two small numbers $0<\eps,\,\beta <1$, and consider the following third order problem
\begin{equation}
\label{eq:Iseepsw}
\begin{cases}
\displaystyle\pt \ueb -\frac{3}{5}\px \ueb^5 +\beta\pxxx\ueb\\
\quad\quad=  3\beta\ueb^2\pxx\ueb- 9\beta\ueb(\px\ueb)^2+\eps\pxx\ueb,&\quad t>0,\ x\in\R,\\
\ueb(0,x)=u_{\eps,\beta,0}(x),&\quad x\in\R,
\end{cases}
\end{equation}
where $u_{\eps,\beta,0}$ is a $C^\infty$ approximation of $u_{0}$ such that
\begin{equation}
\begin{split}
\label{eq:u0eps}
&u_{\eps,\,\beta,\,0} \to u_{0} \quad  \textrm{in $L^{p}_{loc}(\R)$, $1\le p < 10$, as $\eps,\,\beta \to 0$,}\\
&\norm{u_{\eps,\beta, 0}}^2_{L^2(\R)}+ \norm{u_{\eps,\beta, 0}}^{10}_{L^{10}(\R)}+(\beta+ \eps^2) \norm{\px u_{\eps,\beta,0}}^2_{L^2(\R)}\le C_{0}, \quad \eps,\beta >0,
\end{split}
\end{equation}
where $C_0$ is a constant independent on $\eps$ and $\beta$.

The main result of this paper is the following theorem.

\begin{theorem}
\label{th:main}
Assume that \eqref{eq:u-0} and  \eqref{eq:u0eps} hold.
If
\begin{equation}
\label{eq:beta-eps}
\beta=\mathbf{\mathcal{O}}(\eps^2),
\end{equation}
then, there exist two sequences $\{\eps_{n}\}_{n\in\N}$, $\{\beta_{n}\}_{n\in\N}$, with $\eps_n, \beta_n \to 0$, and a limit function
\begin{equation*}
u\in L^{\infty}(0,T; L^2(\R)\cap L^{10}(\R)), \quad T>0,\\
\end{equation*}
such that
\begin{itemize}
\item[$i)$] $u$ is  a distributional solution of \eqref{eq:con-law},
\item[$ii)$] $u_{\eps_n, \beta_n}\to u$ strongly in $L^{p}_{loc}((0,\infty)\times\R)$, for each $1\le p <10$.
\end{itemize}
Moreover, if
\begin{equation}
\label{eq:beta-eps-1}
\beta=\mathbf{\mathcal{O}}(\eps^{2+\alpha}), \quad \textrm{for some $\alpha>0$},
\end{equation}
then,
\begin{itemize}
\item[$iii)$] $u$ is  the unique entropy solution of \eqref{eq:con-law}.
\end{itemize}
\end{theorem}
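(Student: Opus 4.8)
The plan is to follow the standard compensated-compactness program for dispersion-diffusion limits: derive uniform-in-$(\eps,\beta)$ a priori estimates on $\ueb$, use them to prove $\Hneg$-compactness of the entropy production, reduce the associated Young measure to a Dirac mass to obtain strong convergence, pass to the limit in the weak formulation, and finally exploit the stronger scaling to fix the sign of the limiting entropy production and invoke \Kruzkov uniqueness.

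The first and most laborious step is the a priori estimates. Multiplying \eqref{eq:Iseepsw} by $\ueb$ and integrating over $\R$, the flux term and the linear dispersive term vanish by integration by parts, while the two nonlinear dispersive terms combine with the diffusion into dissipative contributions, giving
\[
\frac{1}{2}\frac{d}{dt}\norm{\ueb(t)}_{L^2(\R)}^2+\eps\int_\R(\px\ueb)^2\dx+18\beta\int_\R\ueb^2(\px\ueb)^2\dx=0.
\]
This furnishes a uniform $L^\infty(0,T;L^2(\R))$ bound, together with $\eps\norm{\px\ueb}_{L^2(\R)}^2$ and $\beta\norm{\px(\ueb^2)}_{L^2(\R)}^2$ in $L^1(0,T)$. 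Testing against $\ueb^9$ yields the $L^\infty(0,T;L^{10}(\R))$ bound, so that the flux $\ueb^5$ is bounded in $L^\infty(0,T;L^2(\R))$; since no maximum principle is available, this is the $L^p$ framework announced in the abstract. A second estimate, obtained by testing against $\pxx\ueb$ (consistent with the term $(\beta+\eps^2)\norm{\px u_{\eps,\beta,0}}^2$ in \eqref{eq:u0eps}), controls $(\beta+\eps^2)\norm{\px\ueb}_{L^2(\R)}^2$ and a higher dissipation of the form $\eps^3\dint(\pxx\ueb)^2$; under $\beta=\mathcal{O}(\eps^2)$ this forces $\beta\pxx\ueb\to0$ in $L^2$, which is precisely what is needed to discard the dispersive terms later.

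For the compactness, fix a smooth entropy/entropy-flux pair $(\eta,q)$ with $q'=-3u^4\eta'$. Using \eqref{eq:Iseepsw},
\[
\pt\eta(\ueb)+\px q(\ueb)=\eta'(\ueb)\Big(\eps\pxx\ueb-\beta\pxxx\ueb+3\beta\ueb^2\pxx\ueb-9\beta\ueb(\px\ueb)^2\Big).
\]
I would write the diffusion as $\eps\px(\eta'(\ueb)\px\ueb)-\eps\eta''(\ueb)(\px\ueb)^2$: the first summand is $\px$ of a quantity vanishing in $L^2$, hence compact in $\Hneg$, the second is bounded in $L^1$, hence in $\CMloc$. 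After repeated integration by parts the $\beta$-terms are handled the same way, the borderline contributions (typically $\beta\eta'\pxx\ueb$) being shown to vanish in $L^2$ thanks to $\beta\pxx\ueb\to0$ and $\beta=\mathcal{O}(\eps^2)$. Murat's lemma then gives precompactness of $\pt\eta(\ueb)+\px q(\ueb)$ in $\Hneg$; applying the div-curl lemma to two such pairs and using the genuine nonlinearity of $u\mapsto u^5$ (its second derivative vanishes only at $u=0$), DiPerna's reduction forces the Young measure of $\{\ueb\}$ to be a point mass. Hence $\ueb\to u$ strongly in $L^p_{loc}$ for $1\le p<10$, and passing to the limit in the weak form of \eqref{eq:Iseepsw}---where every $\beta$-term disappears---shows $u$ solves \eqref{eq:con-law} distributionally, proving $i)$ and $ii)$.

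For $iii)$, take $\eta$ convex. The term $-\eps\eta''(\ueb)(\px\ueb)^2\le0$ is a non-positive measure, the $\eps$-gradient term is $\px$ of something vanishing in $L^2$, and under the stronger scaling $\beta=\mathcal{O}(\eps^{2+\alpha})$ the dispersive remainders vanish in the limit rather than merely stay bounded; passing to the limit yields $\pt\eta(u)+\px q(u)\le0$ in $\mathcal{D}'$ for every convex $\eta$, so $u$ is an entropy solution and is therefore unique by the \Kruzkov theory. The main obstacle throughout is the treatment of the nonlinear dispersive terms $3\beta\ueb^2\pxx\ueb$ and $-9\beta\ueb(\px\ueb)^2$: one must bound them by the available dissipation and split them into an $\Hneg$-compact part plus a $\CMloc$-bounded part, and for $iii)$ verify that the sharper scaling makes their limiting contribution actually vanish---this is exactly where the precise relation between $\beta$ and $\eps$ enters.
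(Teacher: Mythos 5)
Your overall program is the paper's program --- the same energy-multiplier estimates, Murat's lemma combined with Schonbek's $L^p$ compensated compactness to collapse the Young measure, and \Kruzkov uniqueness under the sharper scaling --- but there is one genuine gap in the a priori estimate chain, and it propagates. Testing \eqref{eq:Iseepsw} against $\ueb^9$ does \emph{not} by itself yield the $L^{\infty}(0,T;L^{10}(\R))$ bound: the linear dispersion leaves the remainder
\begin{equation*}
\beta\int_{\R}\ueb^9\pxxx\ueb\,dx \;=\; 36\,\beta\int_{\R}\ueb^7(\px\ueb)^3\,dx,
\end{equation*}
a sign-indefinite term carrying only the bare weight $\beta$, which is not absorbed by the dissipation coming from your $L^2$ identity ($\eps\int(\px\ueb)^2$, $\beta\int\ueb^2(\px\ueb)^2$) nor by your $\eps^2$-weighted $\pxx\ueb$ test. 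The paper closes it (Lemma \ref{lm:stima-l-10}, estimate \eqref{eq:you15}) only after an intermediate estimate your sketch never derives: multiplying by the Hamiltonian-type quantity $-\beta\pxx\ueb+\tfrac{3}{5}\ueb^5$ (Lemma \ref{lm:u-infty}) gives $\beta\norm{\px\ueb(t,\cdot)}^2_{L^2(\R)}\le C_0$ together with the dissipations $\eps\int\ueb^4(\px\ueb)^2$ and $\beta\int\ueb^6(\px\ueb)^2$, whence the interpolation bound \eqref{eq:u-l-infty}, $\norm{\ueb(t,\cdot)}_{L^{\infty}(\R)}\le C_0\beta^{-1/4}$; combining this with $\beta=\mathcal{O}(\eps^2)$ absorbs the bad term into the $\eps$-dissipation. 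Note also that, for this reason, the $L^{10}$ bound is itself conditional on the scaling \eqref{eq:beta-eps}, whereas your sketch presents it as an unconditional consequence of the equation.

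The same missing interpolation bound is indispensable later, where your stated mechanism --- $\beta\pxx\ueb\to0$ in $L^2$ --- disposes only of the term $\px\!\left(\beta\eta'(\ueb)\pxx\ueb\right)$ (the paper's $I_{3,\eps,\beta}$). It says nothing about the genuinely nonlinear dispersive remainders $3\beta\eta'(\ueb)\ueb^2\px\ueb$ and $15\beta\eta'(\ueb)\ueb(\px\ueb)^2$ ($I_{5,\eps,\beta}$ and $I_{7,\eps,\beta}$), nor, for part $iii)$, about the terms \eqref{eq:789}--\eqref{eq:791}; all of these are killed in the paper through $\norm{\ueb(t,\cdot)}_{L^{\infty}(\R)}\le C_0\beta^{-1/4}$, in the pattern
\begin{equation*}
\beta\,\norm{\ueb}_{L^{\infty}}\int_{0}^{T}\!\!\!\int_{\R}(\px\ueb)^2\,dt\,dx \;\le\; C_0\,\frac{\beta^{3/4}}{\eps}\;\le\; C_0\,\eps^{1/2}\;\to\;0,
\end{equation*}
which has no substitute among the estimates you list. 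So the strategy is right and, once the Lemma \ref{lm:u-infty}-type estimate is inserted, your outline becomes essentially the paper's proof; as written, however, both the $L^{10}$ estimate and the dismissal of the nonlinear dispersive terms fail at exactly the points flagged above.
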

The paper is organized in three sections. In Section \ref{sec:vv}, we prove some a priori estimates, while in Section \ref{sec:theor} we prove Theorem \ref{th:main}.

\section{A priori Estimates}
\label{sec:vv}

This section is devoted to some a priori estimates on $\ueb$. We denote with $C_0$ the constants which depend only on the initial data, and with $C(T)$ the constants which depend also on $T$.

\begin{lemma}\label{lm:l-2}
For each $t>0$,
\begin{equation}
\label{eq:l-2}
\norm{\ueb(t,\cdot)}^2_{L^2(\R)}+2\eps\int_{0}^{t}\norm{\px\ueb(s,\cdot)}^2_{L^2(\R)}ds +36\beta\int_{0}^{t}\!\!\!\int_{\R}\ueb^2(\px\ueb)^2dsdx\le C_{0}.
\end{equation}
\end{lemma}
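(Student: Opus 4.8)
The plan is to derive the $L^2$ energy estimate \eqref{eq:l-2} in the standard way: multiply the evolution equation in \eqref{eq:Iseepsw} by $\ueb$ and integrate over $\R$ in $x$. The idea is that each term should either integrate to a time derivative of $\frac12\norm{\ueb}^2_{L^2}$, vanish by integration by parts (using the decay of $\ueb$ and its derivatives at infinity so all boundary terms drop), or produce one of the two manifestly nonnegative dissipation terms appearing on the left-hand side of \eqref{eq:l-2}. So first I would verify that $\ueb$ and its spatial derivatives decay sufficiently fast as $\abs{x}\to\infty$ to justify all the integrations by parts; this is where the regularity of the smooth approximate solution $u_{\eps,\beta}$ and the assumption \eqref{eq:u0eps} enter.

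The key computation is to evaluate each term after testing against $\ueb$. The time-derivative term gives $\frac{d}{dt}\,\frac12\norm{\ueb(t,\cdot)}^2_{L^2(\R)}$. The flux term $-\frac35\px(\ueb^5)$ tested against $\ueb$ integrates to zero, since $\int_\R \ueb\,\px(\ueb^5)\dx = -\int_\R 5\ueb^5\px\ueb\dx = -\frac56\int_\R \px(\ueb^6)\dx=0$. The viscous term $\eps\pxx\ueb$ yields, after one integration by parts, $-\eps\norm{\px\ueb}^2_{L^2(\R)}$, which is the source of the $2\eps\int_0^t\norm{\px\ueb}^2$ dissipation. The genuinely dispersive term $\beta\pxxx\ueb$ tested against $\ueb$ integrates to zero because $\int_\R \ueb\,\pxxx\ueb\dx=-\int_\R\px\ueb\,\pxx\ueb\dx=-\frac12\int_\R\px((\px\ueb)^2)\dx=0$.

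The main obstacle, and the step requiring genuine care, is the pair of nonlinear dispersive source terms $3\beta\ueb^2\pxx\ueb-9\beta\ueb(\px\ueb)^2$. The plan is to handle these together. Testing the first against $\ueb$ gives $3\beta\int_\R\ueb^3\pxx\ueb\dx$, which after integration by parts equals $-3\beta\int_\R\px(\ueb^3)\px\ueb\dx=-9\beta\int_\R\ueb^2(\px\ueb)^2\dx$. Testing the second against $\ueb$ gives $-9\beta\int_\R\ueb^2(\px\ueb)^2\dx$. Adding these two contributions produces $-18\beta\int_\R\ueb^2(\px\ueb)^2\dx$, which is exactly twice the coefficient appearing in \eqref{eq:l-2} and has the correct sign to be moved to the left-hand side as a dissipation term. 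This cancellation/combination is precisely what makes the Ibragimov-Shabat structure amenable to an $L^2$ estimate, so I would present that algebra explicitly.

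Collecting everything, I obtain
\begin{equation*}
\frac{d}{dt}\,\frac12\norm{\ueb(t,\cdot)}^2_{L^2(\R)}+\eps\norm{\px\ueb(t,\cdot)}^2_{L^2(\R)}+18\beta\int_\R\ueb^2(\px\ueb)^2\dx=0.
\end{equation*}
Integrating in time from $0$ to $t$ and multiplying by $2$ then yields \eqref{eq:l-2} with the right-hand side bounded by $\norm{u_{\eps,\beta,0}}^2_{L^2(\R)}\le C_0$ via \eqref{eq:u0eps}, which closes the estimate with a constant independent of $\eps$ and $\beta$.
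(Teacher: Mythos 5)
Your proof is correct and takes essentially the same route as the paper: multiply \eqref{eq:Iseepsw} by $\ueb$, integrate by parts so the flux and linear dispersive terms vanish, the viscous term yields the $\eps$-dissipation, and the two nonlinear terms combine to $-18\beta\int_\R\ueb^2(\px\ueb)^2\,dx$, then integrate in time and invoke \eqref{eq:u0eps}. One trivial slip in your prose: $18\beta$ is \emph{half}, not twice, the coefficient $36\beta$ in \eqref{eq:l-2}, but your final multiplication by $2$ makes the algebra come out right (indeed your displayed identity is cleaner than the paper's, which contains a typo writing $18\beta$ where $36\beta$ is meant).
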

\begin{proof}
Multiplying \eqref{eq:Iseepsw} by $\ueb$, we have
\begin{align*}
\frac{1}{2}\frac{d}{dt}\int_{\R}\ueb^2 dx=&\int_{\R}\ueb\pt\ueb dx\\
=& 3\int_{\R}\ueb^5\px\ueb dx - \beta\int_{\R}\ueb\pxxx\ueb dx \\
&+3\beta\int_{\R}\ueb^3\pxx\ueb dx - 9\beta \int_{\R}\ueb^2(\px\ueb)^2 dx\\
&+\eps\int_{\R}\ueb\pxx\ueb dx\\
=& \beta \int_{\R}\px\ueb\pxx\ueb dx -9\beta\int_{\R}\ueb^2(\px\ueb)^2 dx\\
&- 9\beta \int_{\R}\ueb^2(\px\ueb)^2 dx-\eps\int_{\R}(\px\ueb)^2dx,
\end{align*}
that is
\begin{equation}
\frac{d}{dt}\norm{\ueb(t,\cdot)}^2_{L^2(\R)}2\eps\norm{\px\ueb(t,\cdot)}^2_{L^2(\R)}+18\beta\int_{\R}\ueb^2(\px\ueb)^2 dx=0.
\end{equation}
An integration on $(0,t)$ and \eqref{eq:u0eps} give \eqref{eq:l-2}.
\end{proof}

\begin{lemma}\label{lm:u-infty}
For each $t>0$,
\begin{equation}
\label{eq:pxu}
\begin{split}
\beta\norm{\px\ueb(t,\cdot)}^2_{L^2(\R)} &+\frac{1}{5}\norm{\ueb(t,\cdot)}^6_{L^{6}(\R)}\\
& +2\beta\eps\int_{0}^{t}\norm{\pxx\ueb(s,\cdot)}^2_{L^2(\R)}ds +6\beta^2\int_{0}^{t}\norm{\px\ueb(s,\cdot)}^4_{L^4(\R)}ds\\
& +6\eps\int_{0}^{t}\!\!\!\int_{\R}\ueb^4(\px\ueb)^2 dsdx + 6\beta^2\int_{0}^{t}\!\!\!\int_{\R}\ueb^2(\pxx\ueb)^2 ds dx\\
&  + \frac{42}{5}\beta\int_{0}^{t}\!\!\!\int_{\R}\ueb^6(\px\ueb)^2dx \le C_{0}.
\end{split}
\end{equation}
In particular, we have
\begin{equation}
\label{eq:u-l-infty}
\norm{\ueb(t,\cdot)}_{L^{\infty}(\R)}\le C_{0}\beta^{-\frac{1}{4}}.
\end{equation}
Moreover, fixed $T>0$,
\begin{align}
\label{eq:u-l-6}
\norm{\ueb}_{L^6((0,T)\times\R)}\le&C_{0}T^{\frac{1}{6}},\\
\label{eq:u-l-4}
\norm{\ueb}_{L^4((0,T)\times\R))}\le& C_{0}T^{\frac{1}{4}}.
\end{align}
\end{lemma}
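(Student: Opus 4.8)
The plan is to obtain the whole display \eqref{eq:pxu} from a single weighted energy identity, and then to deduce \eqref{eq:u-l-infty}, \eqref{eq:u-l-6} and \eqref{eq:u-l-4} by interpolation. First I would multiply \eqref{eq:Iseepsw} by the combination $\frac{6}{5}\ueb^5-2\beta\pxx\ueb$ and integrate over $\R$. The role of this particular multiplier is that its pairing with the time derivative reconstructs exactly the energy appearing on the left of \eqref{eq:pxu}:
\[
\int_{\R}\pt\ueb\left(\frac{6}{5}\ueb^5-2\beta\pxx\ueb\right)dx
=\frac{d}{dt}\left(\frac{1}{5}\norm{\ueb(t,\cdot)}^6_{L^6(\R)}+\beta\norm{\px\ueb(t,\cdot)}^2_{L^2(\R)}\right).
\]

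Next I would expand the right-hand side of \eqref{eq:Iseepsw} against the two pieces of the multiplier and integrate by parts, discarding boundary terms (legitimate since the smooth solutions and their derivatives decay at infinity by the $L^2$ and higher integrability). Several contributions cancel or vanish: the convective term $\frac35\px\ueb^5$ tested against $\frac65\ueb^5$ integrates to zero, its pairing with $-2\beta\pxx\ueb$ exactly cancels the dispersive term $-\beta\pxxx\ueb$ tested against $\frac65\ueb^5$, while $-\beta\pxxx\ueb$ tested against $-2\beta\pxx\ueb$ vanishes. The surviving terms are all sign-definite and reproduce the dissipation integrals in \eqref{eq:pxu}: the nonlinear dispersive terms $3\beta\ueb^2\pxx\ueb$ and $-9\beta\ueb(\px\ueb)^2$ tested against $\frac65\ueb^5$ give, after one integration by parts each, the term $\beta\int_0^t\!\!\int_\R\ueb^6(\px\ueb)^2\,dsdx$; tested against $-2\beta\pxx\ueb$ they give $\beta^2\int_0^t\!\!\int_\R\ueb^2(\pxx\ueb)^2\,dsdx$ and, after using
\[
\int_{\R}\ueb(\px\ueb)^2\pxx\ueb\,dx=-\frac{1}{3}\int_{\R}(\px\ueb)^4\,dx,
\]
the quartic dissipation $\beta^2\int_0^t\norm{\px\ueb}^4_{L^4(\R)}\,ds$; finally the viscous term $\eps\pxx\ueb$ produces $\eps\int_0^t\!\!\int_\R\ueb^4(\px\ueb)^2\,dsdx$ and $\beta\eps\int_0^t\norm{\pxx\ueb}^2_{L^2(\R)}\,ds$. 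Integrating in time and bounding the initial energy $\frac15\norm{u_{\eps,\beta,0}}^6_{L^6(\R)}+\beta\norm{\px u_{\eps,\beta,0}}^2_{L^2(\R)}$ — the first summand by interpolating $L^6$ between $L^2$ and $L^{10}$, the second directly by \eqref{eq:u0eps} since $\beta\le\beta+\eps^2$ — then yields \eqref{eq:pxu}.

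With \eqref{eq:pxu} available the remaining assertions are soft. For \eqref{eq:u-l-infty} I would apply the one-dimensional inequality $\norm{\ueb(t,\cdot)}^2_{L^\infty(\R)}\le 2\norm{\ueb(t,\cdot)}_{L^2(\R)}\norm{\px\ueb(t,\cdot)}_{L^2(\R)}$ and combine Lemma \ref{lm:l-2}, which controls $\norm{\ueb}_{L^2(\R)}$, with the bound $\norm{\px\ueb}^2_{L^2(\R)}\le C_0\beta^{-1}$ read off from the first term of \eqref{eq:pxu}; this produces the claimed $\beta^{-1/4}$ rate. Estimate \eqref{eq:u-l-6} follows by integrating the pointwise-in-time bound $\norm{\ueb(t,\cdot)}^6_{L^6(\R)}\le C_0$ over $(0,T)$, and \eqref{eq:u-l-4} by the interpolation $\norm{\ueb(t,\cdot)}_{L^4(\R)}\le\norm{\ueb(t,\cdot)}^{1/4}_{L^2(\R)}\norm{\ueb(t,\cdot)}^{3/4}_{L^6(\R)}$, which gives $\norm{\ueb(t,\cdot)}^4_{L^4(\R)}\le C_0$ uniformly in $t$ before integrating over $(0,T)$.

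The main obstacle is the energy identity itself: the multiplier must be tuned so that all the genuinely indefinite contributions cancel in pairs, and this rests on two structural facts — the mutual annihilation of the convective and dispersive cross terms across the two pieces of the multiplier, and the reduction of the cubic-gradient term to the sign-definite quartic via the displayed identity for $\int_\R\ueb(\px\ueb)^2\pxx\ueb\,dx$. Once these cancellations are secured, only careful bookkeeping of the constants through the repeated integrations by parts remains; the three norm estimates are then immediate consequences of interpolation and the Sobolev embedding in one dimension.
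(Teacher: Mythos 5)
Your proposal is correct and follows essentially the same route as the paper: your multiplier $\frac{6}{5}\ueb^5-2\beta\pxx\ueb$ is exactly twice the paper's multiplier $-\beta\pxx\ueb+\frac{3}{5}\ueb^5$, and you identify the same cancellations (convective/dispersive cross terms, $\int_{\R}\pxx\ueb\,\pxxx\ueb\,dx=0$) and the same reduction $\int_{\R}\ueb(\px\ueb)^2\pxx\ueb\,dx=-\frac{1}{3}\int_{\R}(\px\ueb)^4\,dx$, so the energy identity comes out as in the paper up to the harmless factor of two. The deductions of \eqref{eq:u-l-infty} and \eqref{eq:u-l-6} coincide with the paper's, and your interpolation $\norm{\ueb}_{L^4}\le\norm{\ueb}_{L^2}^{1/4}\norm{\ueb}_{L^6}^{3/4}$ for \eqref{eq:u-l-4} is an immaterial variant of the paper's Young-inequality step $\ueb^4\le\frac{1}{2}\ueb^2+\frac{1}{2}\ueb^6$.
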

\begin{proof}
Multiplying  \eqref{eq:Iseepsw} by $\displaystyle -\beta\pxx\ueb +\frac{3}{5}\ueb^5$,  we have
\begin{equation}
\label{eq:Ohmp}
\begin{split}
\left(-\beta\pxx\ueb +\frac{3}{5}\ueb^5\right)\pt\ueb &- \frac{3}{5}\left(-\beta\pxx\ueb +\frac{3}{5}\ueb^5\right)\px\ueb^5\\
&+\beta\left(-\beta\pxx\ueb +\frac{3}{5}\ueb^5\right)\pxxx\ueb\\
=&3\beta\left(-\beta\pxx\ueb +\frac{3}{5} \ueb^5\right)\ueb^2\pxx\ueb\\
&-9\beta \left(-\beta\pxx\ueb +\frac{3}{5} \ueb^5\right)\ueb(\px\ueb)^2\\
&+\eps\left(-\beta\pxx\ueb+ \frac{3}{5}\ueb^5\right)\pxx\ueb.
\end{split}
\end{equation}
Observe that
\begin{align*}
\int_{\R}\left(-\beta\pxx\ueb+\frac{3}{5}\ueb^5\right)\pt\ueb dx=&\frac{d}{dt}\left(\frac{\beta}{2}\norm{\px\ueb(t,\cdot)}^2_{L^2(\R)}+\frac{1}{10}\norm{\ueb(t,\cdot)}^6_{L^{6}(\R)}\right),\\
- \frac{3}{5}\int_{\R}\left(-\beta\pxx\ueb +\frac{3}{5}\ueb^5\right)\px \ueb^5 dx =&3\beta\int_{\R}\ueb^4\px\ueb\pxx\ueb dx,\\
\beta\int_{\R}\left(-\beta\pxx\ueb +\frac{3}{5}\ueb^5\right)\pxxx\ueb dx =& -3\beta\int_{\R}\ueb^4\px\ueb\pxx\ueb dx,\\
3\beta\int_{\R}\left(-\beta\pxx\ueb +\frac{3}{5} \ueb^5\right)\ueb^2\pxx\ueb dx=&-3\beta^2\int_{\R}\ueb^2(\pxx\ueb)^2 dx\\
&+ \frac{9}{5} \beta \int_{\R} \ueb^{7}\pxx\ueb dx, \\
-9\beta\int_{\R} \left(-\beta\pxx\ueb +\frac{3}{5}\ueb^5\right)\ueb(\px\ueb)^2 =&9\beta^2\int_{\R}\ueb(\px\ueb)^2 \pxx\ueb dx\\
& -\frac{27}{5}\beta\int_{\R}\ueb^6(\px\ueb)^2dx,\\
\eps\int_{\R} \left(-\beta\pxx\ueb +\frac{3}{5}\ueb^5\right)\pxx\ueb dx = & -\eps\beta \norm{\pxx\ueb(t,\cdot)}^2_{L^2(\R)}\\
&- 3\eps\int_{\R}\ueb^4(\px\ueb)^2dx.
\end{align*}
Therefore, integrating \eqref{eq:Ohmp} over $\R$,
\begin{equation}
\label{eq:p12}
\begin{split}
&\frac{d}{dt}\left(\frac{\beta}{2}\norm{\px\ueb(t,\cdot)}^2_{L^2(\R)}+\frac{1}{10}\norm{\ueb(t,\cdot)}^6_{L^{6}(\R)}\right)\\
&\qquad +\beta\eps\norm{\pxx\ueb(t,\cdot)}^2_{L^2(\R)} +3\eps\int_{\R}\ueb^4(\px\ueb)^2dx\\
 &\qquad+ 3\beta^2\int_{\R}\ueb^2(\pxx\ueb)^2 dx + \frac{27}{5}\beta\int_{\R}\ueb^6(\px\ueb)^2dx\\
&\quad = \frac{9}{5} \beta \int_{\R} \ueb^{7}\pxx\ueb dx +9\beta^2\int_{\R}\ueb(\px\ueb)^2 \pxx\ueb dx.
\end{split}
\end{equation}
Since
\begin{align*}
\frac{9}{5} \beta \int_{\R} \ueb^{7}\pxx\ueb dx=&-\frac{63}{5}\beta\int_{\R}\ueb^6(\px\ueb)^2 dx,\\
9\beta^2\int_{\R}\ueb(\px\ueb)^2 \pxx\ueb dx=& -3\beta^2\int_{\R}(\px\ueb)^4dx ,
\end{align*}
it follows from \eqref{eq:p12} that
\begin{align*}
&\frac{d}{dt}\left(\frac{\beta}{2}\norm{\px\ueb(t,\cdot)}^2_{L^2(\R)}+\frac{1}{10}\norm{\ueb(t,\cdot)}^6_{L^{6}(\R)}\right)\\
&\qquad + \beta\eps\norm{\pxx\ueb(t,\cdot)}^2_{L^2(\R)} +3\eps\int_{\R}\ueb^4(\px\ueb)^2dx\\
&\qquad + 3\beta^2\int_{\R}\ueb^2(\pxx\ueb)^2 dx + 18\beta\int_{\R}\ueb^6(\px\ueb)^2dx \\
&\qquad +3\beta^2\norm{\px\ueb(t,\cdot)}^4_{L^4(\R)}=0.
\end{align*}
An integration on $(0,t)$ and \eqref{eq:u0eps} give \eqref{eq:pxu}.

We prove \eqref{eq:u-l-infty}. Due to \eqref{eq:l-2}, \eqref{eq:pxu} and the H\"older inequality,
\begin{align*}
\ueb^2(t,x)=&2\int_{-\infty}^x \ueb\px\ueb dx \le 2\int_{\R}\vert \ueb\vert \vert \px\ueb\vert dx\\
\le&2\norm{\ueb(t,\cdot)}_{L^2(\R)}\norm{\px\ueb(t,\cdot)}_{L^2(\R)}\le C_{0}\beta^{-\frac{1}{2}},
\end{align*}
which gives \eqref{eq:u-l-infty}.

We prove \eqref{eq:u-l-6}. From \eqref{lm:u-infty}, we have
\begin{equation*}
\norm{\ueb(t,\cdot)}^6_{L^6(\R)}\le C_{0}.
\end{equation*}
An integration on $(0,T)$ gives \eqref{eq:u-l-6}.

Finally, we prove \eqref{eq:u-l-4}. Due to \eqref{eq:l-2}, \eqref{eq:pxu} and the Young inequality,
\begin{equation}
\label{eq:12346}
\int_{\R}\ueb^4 dx \le \frac{1}{2}\int_{\R}\ueb^2 dx + \frac{1}{2}\int_{\R}\ueb^6 dx \le C_{0}.
\end{equation}
Therefore, fix $T>0$, \eqref{eq:u-l-4} follows from \eqref{eq:12346} and an integration on $(0,T)$.
\end{proof}

\begin{lemma}\label{lm:stima-l-10}
Let  $T>0$. Assume \eqref{eq:beta-eps} holds true. There exists  $C(T)>0$, independent on $\eps$ and $\beta$, such that
\begin{equation}
\label{eq:l-10}
\begin{split}
\frac{1}{10}\norm{\ueb(t,\cdot)}^{10}_{L^{10}(\R)}&+\frac{3\eps^2}{2}\norm{\px\ueb(t,\cdot)}^2_{L^2(\R)}\\
& +45\eps^2\beta\int_{0}^{t}\norm{\px\ueb(s,\cdot)}^{4}_{L^4(\R)}ds+4\eps\int_{0}^{t}\!\!\!\int_{\R}\ueb^8(\px\ueb)^2 dsdx\\
& +\frac{1}{2}\eps^3\int_{0}^{t}\norm{\pxx\ueb(s\cdot)}^2_{L^2(\R)}ds +15\beta\eps^2\int_{0}^{t}\!\!\!\int_{\R}\ueb^2(\pxx\ueb)^2dx\\
& +48\beta\int_{0}^{t}\!\!\!\int_{\R}\ueb^{10}(\px\ueb)^2 dx \le C(T).
\end{split}
\end{equation}
for every $0<t<T$. Moreover,
\begin{align}
\label{eq:00010}
\beta\norm{\px\ueb\pxx\ueb}_{L^{1}((0,T)\times\R)} \le &C(T),\\
\label{eq:defuxx}
\beta^2\int_{0}^{T}\norm{\pxx\ueb(s,\cdot)}^2_{L^2(\R)}ds \le& C(T)\eps.
\end{align}
\end{lemma}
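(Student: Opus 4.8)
The plan is to prove \eqref{eq:l-10} as an energy estimate, multiplying \eqref{eq:Iseepsw} by the multiplier $\ueb^9-3\eps^2\pxx\ueb$ and integrating over $\R$; this is the natural successor of the multipliers $\ueb$ and $\tfrac35\ueb^5-\beta\pxx\ueb$ used in Lemmas~\ref{lm:l-2} and~\ref{lm:u-infty}. The term $\int_\R\ueb^9\pt\ueb\dx$ reproduces $\tfrac1{10}\tfrac{d}{dt}\norm{\ueb(t,\cdot)}^{10}_{L^{10}(\R)}$, while $-3\eps^2\int_\R\pxx\ueb\,\pt\ueb\dx$, after one integration by parts in $x$, gives $\tfrac{3\eps^2}{2}\tfrac{d}{dt}\norm{\px\ueb(t,\cdot)}^2_{L^2(\R)}$, so the left-hand side produces exactly the two ``energy'' quantities differentiated in \eqref{eq:l-10}.

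Next I would integrate by parts every term coming from the right-hand side of \eqref{eq:Iseepsw}. The flux $\tfrac35\px\ueb^5$ tested against $\ueb^9$ integrates to zero. The diffusion $\eps\pxx\ueb$ tested against $\ueb^9$ and against $-3\eps^2\pxx\ueb$ yields the dissipative integrals $\eps\int_\R\ueb^8(\px\ueb)^2\dx$ and $\eps^3\int_\R(\pxx\ueb)^2\dx$. The nonlinear dispersive terms $3\beta\ueb^2\pxx\ueb$ and $-9\beta\ueb(\px\ueb)^2$ tested against $\ueb^9$ combine into the positive term $\beta\int_\R\ueb^{10}(\px\ueb)^2\dx$, and tested against $-3\eps^2\pxx\ueb$ they produce $\eps^2\beta\int_\R\ueb^2(\pxx\ueb)^2\dx$ together with, via the identity $\int_\R\ueb(\px\ueb)^2\pxx\ueb\dx=-\tfrac13\int_\R(\px\ueb)^4\dx$, the term $\eps^2\beta\int_\R(\px\ueb)^4\dx$. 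These reproduce the dissipative integrals on the left of \eqref{eq:l-10}, up to the explicit constants.

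The obstacle is that two contributions carry no sign: the flux against $-3\eps^2\pxx\ueb$ produces a term proportional to $\eps^2\int_\R\ueb^3(\px\ueb)^3\dx$, and the linear dispersion $\beta\pxxx\ueb$ against $\ueb^9$ produces one proportional to $\beta\int_\R\ueb^7(\px\ueb)^3\dx$. These indefinite cross terms must be absorbed, and this is exactly where \eqref{eq:beta-eps} enters: under $\beta=\mathcal{O}(\eps^2)$ the dispersive cross term sits at the same order as the $\eps^2$-dissipative terms, so that, after Young's inequality (using also the $L^\infty$ bound \eqref{eq:u-l-infty}), both indefinite terms are dominated by the positive quantities $\eps\int_\R\ueb^8(\px\ueb)^2\dx$, $\eps^3\int_\R(\pxx\ueb)^2\dx$, $\eps^2\beta\int_\R(\px\ueb)^4\dx$ and $\beta\int_\R\ueb^{10}(\px\ueb)^2\dx$, at the price of adjusting their constants. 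I expect this to be the hard part: the estimate is critical, so one must check the numerology carefully and verify that the good terms really dominate under the sharp balance $\beta\sim\eps^2$ and not merely $\beta\to0$. Integrating the resulting differential inequality over $(0,t)$ and invoking \eqref{eq:u0eps} together with Lemmas~\ref{lm:l-2} and~\ref{lm:u-infty} for the lower-order quantities then yields \eqref{eq:l-10}.

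Finally, \eqref{eq:00010} and \eqref{eq:defuxx} follow from \eqref{eq:l-10}. For \eqref{eq:defuxx}, the bound $\tfrac12\eps^3\int_0^t\norm{\pxx\ueb(s,\cdot)}^2_{L^2(\R)}ds\le C(T)$ gives $\int_0^T\norm{\pxx\ueb(s,\cdot)}^2_{L^2(\R)}ds\le 2C(T)\eps^{-3}$; multiplying by $\beta^2$ and using $\beta^2\eps^{-3}=\mathcal{O}(\eps)$ from \eqref{eq:beta-eps} gives \eqref{eq:defuxx}. For \eqref{eq:00010}, I would use Cauchy--Schwarz, $\beta\norm{\px\ueb\pxx\ueb}_{L^1((0,T)\times\R)}\le\beta\norm{\px\ueb}_{L^2((0,T)\times\R)}\norm{\pxx\ueb}_{L^2((0,T)\times\R)}$, then bound $\norm{\px\ueb}^2_{L^2((0,T)\times\R)}\le C_0\eps^{-1}$ by \eqref{eq:l-2} and $\norm{\pxx\ueb}^2_{L^2((0,T)\times\R)}\le 2C(T)\eps^{-3}$ by \eqref{eq:l-10}; the product is $\le\beta\,C(T)\eps^{-2}=\mathcal{O}(1)$ by \eqref{eq:beta-eps}, which is \eqref{eq:00010}.
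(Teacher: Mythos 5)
Your proposal is correct and follows essentially the same route as the paper's proof: the paper multiplies \eqref{eq:Iseepsw} by $\ueb^9-5\eps^2\pxx\ueb$ (your coefficient $3\eps^2$ is an immaterial variant, in fact more consistent with the $\tfrac{3\eps^2}{2}$ appearing in \eqref{eq:l-10}), integrates by parts to isolate exactly the two indefinite cross terms you identify, and absorbs them via Young's inequality, the $L^\infty$ bound \eqref{eq:u-l-infty} and $\beta=\mathcal{O}(\eps^2)$, with the one leftover piece $C_0\,\eps\int_\R\ueb^4(\px\ueb)^2\,dx$ controlled after time integration by the dissipation in \eqref{eq:pxu} --- precisely the invocation of Lemma \ref{lm:u-infty} that your closing sentence anticipates. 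Your derivations of \eqref{eq:00010} and \eqref{eq:defuxx} from \eqref{eq:l-10}, \eqref{eq:l-2} and \eqref{eq:beta-eps} coincide with the paper's.
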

\begin{proof}
Let $0<t< T$. Multiplying \eqref{eq:Iseepsw} by $\ueb^9-5\eps^2\pxx\ueb$, we have
\begin{equation}
\label{eq:Ohmp-1}
\begin{split}
(\ueb^9 -5\eps^2\pxx\ueb)\pt\ueb &- \frac{3}{5}(\ueb^9 -5\eps^2\pxx\ueb)\px \ueb^5\\
&+\beta(\ueb^9 -5\eps^2\pxx\ueb)\pxxx\ueb\\
=& 3\beta(\ueb^9 -5\eps^2\pxx\ueb)\ueb^2\pxx\ueb\\
&-9\beta (\ueb^9 -5\eps^2\pxx\ueb)\ueb(\px\ueb)^2\\
&+ \eps (\ueb^9 -5\eps^2\pxx\ueb)\pxx\ueb.
\end{split}
\end{equation}
Since
\begin{align*}
\int_{\R}(\ueb^9 -3\eps^2\pxx\ueb)\pt\ueb dx=&\frac{d}{dt}\left(\frac{1}{10}\norm{\ueb(t,\cdot)}^{10}_{L^{10}(\R)}+\frac{5\eps^2}{2}\norm{\px\ueb(t,\cdot)}^2_{L^2(\R)}\right),\\
- \frac{3}{5}\int_{\R}(\ueb^9 -5\eps^2\pxx\ueb)\px \ueb^5 dx=&9\eps^2\int_{\R}\ueb^4\px\ueb\pxx\ueb dx,\\
\beta\int_{\R}(\ueb^9 -5\eps^2\pxx\ueb)\pxxx\ueb dx=& -9\beta\int_{\R}\ueb^8\px\ueb\pxx\ueb dx\\
=& 36\beta\int_{\R}\ueb^7(\px\ueb)^2dx,\\
3\beta\int_{\R}(\ueb^9 -5\eps^2\pxx\ueb)\ueb^2\pxx\ueb dx=& 3\beta\int_{\R}\ueb^{11}\pxx\ueb dx \\
&-15\beta\eps^2\int_{\R}\ueb^2(\pxx\ueb)^2dx\\
-9\beta\int_{\R}(\ueb^9 -5\eps^2\pxx\ueb)\ueb(\px\ueb)^2= &-9\beta\int_{\R}\ueb^{10}(\px\ueb)^2 dx\\
&+45\eps^2\beta\int_{\R}\ueb(\px\ueb)^2\pxx\ueb dx\\
\eps\int_{\R}(\ueb^9 -5\eps^2\pxx\ueb)\pxx\ueb dx = & -9\eps\int_{\R}\ueb^8(\px\ueb)^2 dx -5\eps^3\norm{\pxx\ueb(t,\cdot)}^2_{L^2(\R)},
\end{align*}
integrating \eqref{eq:Ohmp-1} over $\R$,
\begin{equation}
\label{eq:Ise1}
\begin{split}
&\frac{d}{dt}\left(\frac{1}{10}\norm{\ueb(t,\cdot)}^{10}_{L^{10}(\R)}+\frac{5\eps^2}{2}\norm{\px\ueb(t,\cdot)}^2_{L^2(\R)}\right)\\
&\qquad + 9\eps\int_{\R}\ueb^8(\px\ueb)^2 dx + 5\eps^3\norm{\pxx\ueb(t\cdot)}^2_{L^2(\R)}\\
&\qquad +15\beta\eps^2\int_{\R}\ueb^2(\pxx\ueb)^2dx +15\beta\eps^2\int_{\R}\ueb^{10}(\px\ueb)^2 dx\\
&\quad = 45\eps^2\beta\int_{\R}\ueb(\px\ueb)^2\pxx\ueb dx + 3\beta\int_{\R}\ueb^{11}\pxx\ueb dx\\
&\qquad -9\eps^2\int_{\R}\ueb^4\px\ueb\pxx\ueb dx +36\beta\int_{\R}\ueb^7(\px\ueb)^2 dx.
\end{split}
\end{equation}
Since
\begin{align*}
45\eps^2\beta\int_{\R}\ueb(\px\ueb)^2\pxx\ueb dx =& -45\eps^2\beta\int_{\R}(\px\ueb)^4 dx,\\
3\beta\int_{\R}\ueb^{11}\pxx\ueb dx=& -33\beta\int_{\R}\ueb^{10}(\px\ueb)^2 dx,
\end{align*}
from  \eqref{eq:Ise1}, we have
\begin{equation}
\label{eq:Ise3}
\begin{split}
&\frac{d}{dt}\left(\frac{1}{10}\norm{\ueb(t,\cdot)}^{10}_{L^{10}(\R)}+\frac{3\eps^2}{2}\norm{\px\ueb(t,\cdot)}^2_{L^2(\R)}\right)\\
&\qquad +45\eps^2\beta\norm{\px\ueb(t,\cdot)}^{4}_{L^4(\R)} + 9\eps\int_{\R}\ueb^8(\px\ueb)^2 dx \\
&\qquad +5\eps^3\norm{\pxx\ueb(t\cdot)}^2_{L^2(\R)} +15\beta\eps^2\int_{\R}\ueb^2(\pxx\ueb)^2dx\\
&\qquad +48\beta \int_{\R}\ueb^{10}(\px\ueb)^2 dx\\
&\quad =-9\eps^2\int_{\R}\ueb^4\px\ueb\pxx\ueb dx +36\beta\int_{\R}\ueb^7(\px\ueb)^2 dx.
\end{split}
\end{equation}
Due to the Young inequality,
\begin{equation}
\label{eq:you10}
\begin{split}
&9\eps^2\left\vert\int_{\R}\ueb^4\px\ueb\pxx\ueb dx\right\vert \le 9\int_{\R}\left\vert\eps^{\frac{1}{2}}\ueb^4\px\ueb\right\vert\left\vert\eps^{\frac{3}{2}}\pxx\ueb\right\vert dx\\
&\quad \le\frac{9\eps}{2}\int_{\R}\ueb^8(\px\ueb)^2dx +\frac{9\eps^3}{2}\norm{\pxx\ueb(t,\cdot)}^2_{L^2(\R)}.
\end{split}
\end{equation}
It follows from \eqref{eq:Ise3} and \eqref{eq:you10} that
\begin{equation}
\label{eq:Ise4}
\begin{split}
&\frac{d}{dt}\left(\frac{1}{10}\norm{\ueb(t,\cdot)}^{10}_{L^{10}(\R)}+\frac{3\eps^2}{2}\norm{\px\ueb(t,\cdot)}^2_{L^2(\R)}\right)\\
&\qquad +45\eps^2\beta\norm{\px\ueb(t,\cdot)}^{4}_{L^4(\R)} + \frac{9}{2}\eps\int_{\R}\ueb^8(\px\ueb)^2 dx \\
&\qquad +\frac{1}{2}\eps^3\norm{\pxx\ueb(t\cdot)}^2_{L^2(\R)} +15\beta\eps^2\int_{\R}\ueb^2(\pxx\ueb)^2dx\\
&\qquad +48\beta \int_{\R}\ueb^{10}(\px\ueb)^2 dx=36\beta\int_{\R}\ueb^7(\px\ueb)^2 dx.
\end{split}
\end{equation}
Since $0<\eps<1$, thanks to \eqref{eq:beta-eps}, \eqref{eq:u-l-infty} and the Young inequality,
\begin{equation}
\label{eq:you15}
\begin{split}
&36\beta\left\vert\int_{\R}\ueb^7(\px\ueb)^2 dx\right\vert\le 36\beta\int_{\R}\vert\ueb\vert^7 (\px\ueb)^2 dx \\
&\quad \le 36\beta \norm{\ueb(t,\cdot)}_{L^{\infty}(\R)}\int_{\R}\ueb^6(\px\ueb)^2 dx\le C_{0}\beta^{\frac{3}{4}}\int_{\R}\ueb^6(\px\ueb)^2 dx\\
&\quad\le\eps^{\frac{3}{2}}\int_{\R}\vert C_{0}\ueb^2\px\ueb\vert\ueb^4\vert\px\ueb\vert dx\\
&\quad= \int_{\R}\vert C_{0}\eps\ueb^2\px\ueb\left\vert\eps^{\frac{1}{2}}\ueb^4\vert\px\ueb\right\vert dx \\
&\quad\le \eps^2 C_{0}\int_{\R}\ueb^4(\px\ueb)^2dx + \frac{\eps}{2}\int_{\R}\ueb^8(\px\ueb)^2 dx\\
&\quad \le \eps C_0\int_{\R}\ueb^4(\px\ueb)^2dx  + \frac{\eps}{2}\int_{\R}\ueb^8(\px\ueb)^2 dx.
\end{split}
\end{equation}
Therefore, from \eqref{eq:Ise4} and \eqref{eq:you15},
\begin{equation*}
\label{eq:Ise5}
\begin{split}
&\frac{d}{dt}\left(\frac{1}{10}\norm{\ueb(t,\cdot)}^{10}_{L^{10}(\R)}+\frac{3\eps^2}{2}\norm{\px\ueb(t,\cdot)}^2_{L^2(\R)}\right)\\
&\qquad +45\eps^2\beta\norm{\px\ueb(t,\cdot)}^{4}_{L^4(\R)} + 4\eps\int_{\R}\ueb^8(\px\ueb)^2 dx \\
&\qquad +\frac{1}{2}\eps^3\norm{\pxx\ueb(t\cdot)}^2_{L^2(\R)} +15\beta\eps^2\int_{\R}\ueb^2(\pxx\ueb)^2dx\\
&\qquad +48\beta \int_{\R}\ueb^{10}(\px\ueb)^2 dx\le \eps C_0\int_{\R}\ueb^4(\px\ueb)^2dx.
\end{split}
\end{equation*}
An integration on $(0,t)$ and \eqref{eq:pxu} give \eqref{eq:l-10}.

We show that \eqref{eq:defuxx} holds. Thanks to  \eqref{eq:beta-eps}, \eqref{eq:l-2}, \eqref{eq:l-10} and H\"older inequality,
\begin{align*}
&\beta\int_{0}^{T}\!\!\!\int_{\R}\vert\px\ueb\pxx\ueb\vert dsdx =\frac{\beta}{\eps^2}\int_{0}^{T}\!\!\!\int_{\R}\eps^{\frac{1}{2}}\vert\px\ueb\vert\eps^{\frac{3}{2}}\vert\pxx\ueb\vert dx\\
&\quad \le \frac{\beta}{\eps^2} \left(\eps \int_{0}^{T}\!\!\!\int_{\R}(\px\ueb)^2 dsdx\right)^{\frac{1}{2}}\left(\eps^3 \int_{0}^{T}\!\!\!\int_{\R}(\pxx\ueb)^2 dsdx\right)^{\frac{1}{2}}\\
&\quad \le C(T)\frac{\beta}{\eps^2}\le C(T),
\end{align*}
that is \eqref{eq:00010}.

Finally, we prove \eqref{eq:defuxx}. Due to \eqref{eq:beta-eps} and \eqref{eq:l-10}, we have
\begin{align*}
\beta^2\int_{0}^{T}\norm{\pxx\ueb(s,\cdot)}^2_{L^2(\R)}ds \le C_{0}^2\eps^4\int_{0}^{T}\norm{\pxx\ueb(s,\cdot)}^2_{L^2(\R)}ds\le C(T)\eps,
\end{align*}
which gives \eqref{eq:defuxx}.
\end{proof}

\section{Proof of Theorem \ref{th:main}}
\label{sec:theor}
In this section, we prove Theorem \ref{th:main}. The following technical lemma is needed  \cite{Murat:Hneg}.
\begin{lemma}
\label{lm:1}
Let $\Omega$ be a bounded open subset of $
\R^2$. Suppose that the sequence $\{\mathcal
L_{n}\}_{n\in\mathbb{N}}$ of distributions is bounded in
$W^{-1,\infty}(\Omega)$. Suppose also that
\begin{equation*}
\mathcal L_{n}=\mathcal L_{1,n}+\mathcal L_{2,n},
\end{equation*}
where $\{\mathcal L_{1,n}\}_{n\in\mathbb{N}}$ lies in a
compact subset of $H^{-1}_{loc}(\Omega)$ and
$\{\mathcal L_{2,n}\}_{n\in\mathbb{N}}$ lies in a
bounded subset of $\mathcal{M}_{loc}(\Omega)$. Then $\{\mathcal
L_{n}\}_{n\in\mathbb{N}}$ lies in a compact subset of $H^{-1}_{loc}(\Omega)$.
\end{lemma}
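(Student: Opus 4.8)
The plan is to reduce the statement to two classical facts about negative-order Sobolev spaces and then run a Cauchy-sequence argument. Since the conclusion concerns $\Hneg$, it suffices to fix an open set $\Omega'$ with $\overline{\Omega'}\subset\Omega$ and a cutoff $\varphi\in C^\infty_c(\Omega)$ equal to $1$ on $\Omega'$, and to prove that $\{\varphi\mathcal{L}_n\}_{n}$ is relatively compact in $H^{-1}(\Omega')$; multiplication by $\varphi$ is bounded on each space $W^{-1,p}$ and respects all three hypotheses, so no generality is lost.

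First I would handle the measure part. Because $\Omega'\subset\R^2$, Morrey's inequality together with Arzel\`a--Ascoli makes the embedding $W^{1,s}_0(\Omega')\hookrightarrow C_0(\Omega')$ compact for every $s>2$. Taking adjoints and invoking the fact that the adjoint of a compact operator is compact, the embedding $\CM(\Omega')=C_0(\Omega')^{*}\hookrightarrow W^{-1,q}(\Omega')$ is compact for every $q\in(1,2)$, where $1/q+1/s=1$. Consequently the $\CMloc$-bounded family $\{\mathcal{L}_{2,n}\}$ is relatively compact in $W^{-1,q}_{loc}(\Omega)$. Since $\{\mathcal{L}_{1,n}\}$ is relatively compact in $\Hneg=W^{-1,2}_{loc}$ and, on the bounded set $\Omega'$, $W^{-1,2}(\Omega')\hookrightarrow W^{-1,q}(\Omega')$ continuously, the sum $\{\mathcal{L}_n\}$ is relatively compact in $W^{-1,q}_{loc}(\Omega)$.

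The final step upgrades this to $H^{-1}$ by interpolation in the integrability exponent. Fix $q\in(1,2)$ as above and any $r\in(2,\infty)$; on the bounded domain $\Omega'$ the $W^{-1,\infty}$-bound of $\{\mathcal{L}_n\}$ yields a uniform $W^{-1,r}(\Omega')$-bound. Choosing $\theta\in(0,1)$ with $\tfrac12=\tfrac{1-\theta}{q}+\tfrac{\theta}{r}$, the standard interpolation inequality $\norm{f}_{W^{-1,2}(\Omega')}\le C\,\norm{f}_{W^{-1,q}(\Omega')}^{1-\theta}\,\norm{f}_{W^{-1,r}(\Omega')}^{\theta}$ holds. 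Now take any subsequence of $\{\mathcal{L}_n\}$; by the previous paragraph a further subsequence is Cauchy in $W^{-1,q}(\Omega')$. Its pairwise differences are uniformly bounded in $W^{-1,r}(\Omega')$, so the interpolation inequality shows they are Cauchy in $H^{-1}(\Omega')$ as well. Hence this subsequence converges in $H^{-1}(\Omega')$, which is precisely the relative compactness of $\{\mathcal{L}_n\}$ in $\Hneg$.

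The routine part is the Cauchy extraction; the content lies in the two functional-analytic inputs. The more delicate one is the compact embedding $\CM(\Omega')\hookrightarrow W^{-1,q}(\Omega')$ for $q\in(1,2)$, which rests on the compactness of the Morrey embedding $W^{1,s}_0(\Omega')\hookrightarrow C_0(\Omega')$ for $s>2$ (Morrey plus Arzel\`a--Ascoli) and on the duality between compact operators and their adjoints; carrying this out carefully, and checking that the admissible exponents $q=s'$ exhaust all of $(1,2)$, is where I expect the main difficulty. The interpolation inequality for the $W^{-1,p}$ scale is also required, but it is standard for $1<q,r<\infty$, e.g. via the duality $W^{-1,p}=(W^{1,p'}_0)^{*}$ together with complex interpolation, or via Bessel-potential representations after extension to $\R^2$.
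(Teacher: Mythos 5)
Your argument is correct, but there is nothing in the paper to compare it against: the paper states this lemma as a known technical result and cites Murat \cite{Murat:Hneg} for it, giving no internal proof. What you have written is in essence the classical argument behind that citation (the one found in Murat's paper and in the standard compensated-compactness references): localization by a cutoff $\varphi$, compactness of the embedding $\mathcal{M}(\Omega')\hookrightarrow W^{-1,q}(\Omega')$ for $q\in(1,2)$ obtained from the compact Morrey embedding $W^{1,s}_0(\Omega')\hookrightarrow C_0(\Omega')$, $s>2$, via Schauder's theorem on adjoints, and then interpolation between $W^{-1,q}$ and $W^{-1,r}$ with $q<2<r$, using the uniform $W^{-1,\infty}$ bound to control the $W^{-1,r}(\Omega')$ norms; your exponent bookkeeping ($s\in(2,\infty)$ if and only if $q=s'\in(1,2)$, which is exactly the two-dimensional range) and the Cauchy-subsequence extraction are both sound. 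The one point you should make fully explicit is the justification of the interpolation inequality $\norm{f}_{W^{-1,2}(\Omega')}\le C\norm{f}_{W^{-1,q}(\Omega')}^{1-\theta}\norm{f}_{W^{-1,r}(\Omega')}^{\theta}$: on an arbitrary bounded open set the scale $W^{-1,p}(\Omega')$ need not interpolate well, and extension operators for negative-order spaces are delicate; but since each $\varphi\mathcal{L}_n$ has compact support in $\Omega$, it is already a distribution on $\R^2$ with $W^{-1,p}$ norms comparable to those on $\Omega'$ (test functions can be cut off near the support), and on $\R^2$ the identity follows from complex interpolation of Bessel-potential spaces. You gesture at exactly this, so the gap is cosmetic rather than substantive.
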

Moreover, we consider the following definition.
\begin{definition}
A pair of functions $(\eta, q)$ is called an  entropy--entropy flux pair if $\eta :\R\to\R$ is a $C^2$ function and $q :\R\to\R$ is defined by
\begin{equation*}
q(u)=-\int_{0}^{u} 3\xi^4\eta'(\xi) d\xi.
\end{equation*}
An entropy-entropy flux pair $(\eta,\, q)$ is called  convex/compactly supported if, in addition, $\eta$ is convex/compactly supported.
\end{definition}
We begin by proving the following result.
\begin{lemma}\label{lm:dist-solution}
Assume that \eqref{eq:u-0}, \eqref{eq:u0eps}, and \eqref{eq:beta-eps} hold. Then for any compactly supported entropy--entropy flux pair $(\eta,\, q)$, there exist two sequences $\{\eps_{n}\}_{n\in\N}$, $\{\beta_{n}\}_{n\in\N}$, with $\eps_n, \beta_n \to 0$, and a limit function
\begin{equation*}
u\in L^{\infty}(0,T; L^2(\R)\cap L^{10}(\R)),
\end{equation*}
 such that
\begin{equation}
\label{eq:con1}
 u_{\eps_n, \beta_n}\to u \quad  \textrm{in} \quad  L^{p}_{loc}((0,\infty)\times\R),\quad \textrm{for each} \quad 1\le p <10
\end{equation}
and $u$ is a distributional solution of \eqref{eq:Ise}.
\end{lemma}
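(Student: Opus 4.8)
The plan is to combine the uniform bounds of Section~\ref{sec:vv} with the compensated compactness method in the $L^p$ framework. The estimates \eqref{eq:l-2} and \eqref{eq:l-10} bound $\{\ueb\}$ in $L^{\infty}(0,T;L^2(\R)\cap L^{10}(\R))$ uniformly in $\eps,\beta$, so the only missing ingredient is strong $L^p_{loc}$ compactness, which I would obtain from the entropy production. Fix a compactly supported entropy--entropy flux pair $(\eta,q)$, so that $\eta,\eta',\eta''$ are bounded and $q'(u)=-3u^4\eta'(u)$; in particular $q$ is bounded as well. Multiplying \eqref{eq:Iseepsw} by $\eta'(\ueb)$ and using that the convective part collapses to $\pt\eta(\ueb)+\px q(\ueb)$, one gets
\begin{equation*}
\pt\eta(\ueb)+\px q(\ueb)=\eps\eta'(\ueb)\pxx\ueb-\beta\eta'(\ueb)\pxxx\ueb+3\beta\eta'(\ueb)\ueb^2\pxx\ueb-9\beta\eta'(\ueb)\ueb(\px\ueb)^2=:\mathcal L_{\eps,\beta}.
\end{equation*}

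The heart of the proof is to show, via Lemma~\ref{lm:1}, that $\{\mathcal L_{\eps,\beta}\}$ is compact in $\Hneg$. I would decompose each term into a divergence that is asymptotically null in $L^2_{loc}$ (hence compact in $\Hneg$) plus a remainder bounded in $L^1\subset\CMloc$. For the diffusion, $\eps\eta'(\ueb)\pxx\ueb=\px\!\left(\eps\eta'(\ueb)\px\ueb\right)-\eps\eta''(\ueb)(\px\ueb)^2$; the first term tends to $0$ in $L^2$ since $\eps\eta'(\ueb)\px\ueb=\eps^{1/2}\eta'(\ueb)\,\eps^{1/2}\px\ueb$ with $\eps^{1/2}\px\ueb$ bounded in $L^2$ by \eqref{eq:l-2}, while the second is bounded in $L^1$ by the same estimate. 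For the dispersion, $-\beta\eta'(\ueb)\pxxx\ueb=-\px\!\left(\beta\eta'(\ueb)\pxx\ueb\right)+\beta\eta''(\ueb)\px\ueb\pxx\ueb$; here $\beta\eta'(\ueb)\pxx\ueb\to0$ in $L^2$ thanks to \eqref{eq:defuxx}, and the remainder is bounded in $L^1$ thanks to \eqref{eq:00010}. Writing $\psi(u)=u^2\eta'(u)$ and $\phi(u)=u\eta'(u)$ (both bounded, with bounded derivatives, by compact support), the same scheme handles $3\beta\psi(\ueb)\pxx\ueb$ and $-9\beta\phi(\ueb)(\px\ueb)^2$: the divergence parts vanish in $L^2$ using that $\beta^{1/2}\px\ueb$ is bounded in $L^2$ by \eqref{eq:pxu}, and the quadratic remainders are bounded in $L^1$ because $\beta\int_0^T\!\!\int_\R(\px\ueb)^2\,dxds=\frac{\beta}{\eps}\,\eps\int_0^T\!\!\int_\R(\px\ueb)^2\,dxds\to0$ by \eqref{eq:l-2} and \eqref{eq:beta-eps}. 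Since $\eta(\ueb),q(\ueb)$ are bounded in $L^{\infty}$, the left-hand side is bounded in $W^{-1,\infty}_{loc}$, so Lemma~\ref{lm:1} yields compactness of $\mathcal L_{\eps,\beta}$ in $\Hneg$.

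With the entropy productions compact in $\Hneg$ for every compactly supported pair and $\{\ueb\}$ bounded in $L^{\infty}(0,T;L^{10})$, I would invoke the reduction of the associated Young measure. The flux $f(u)=-\tfrac35u^5$ is genuinely nonlinear ($f''(u)=-12u^3$ vanishes only at $u=0$), so Tartar's commutation relation forces the Young measure to be a Dirac mass; combined with the $L^{10}$ bound this gives, along a subsequence $\eps_n,\beta_n\to0$, convergence $u_{\eps_n,\beta_n}\to u$ a.e.\ and in $L^{p}_{loc}((0,\infty)\times\R)$ for all $1\le p<10$, which is \eqref{eq:con1}. Passing to the limit in the weak formulation of \eqref{eq:Iseepsw} is then routine: $\ueb^5\to u^5$ in $L^1_{loc}$ by the strong convergence and the $L^{10}$ bound, while the dispersive and diffusive terms are tested against $\px$-derivatives of the test function and carry prefactors $\beta$ or $\eps$, so they vanish in $\mathcal{D}'$ by the bounds of Section~\ref{sec:vv}. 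Hence $u$ is a distributional solution of \eqref{eq:con-law}.

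I expect the main obstacle to be the Young-measure reduction in the genuine $L^p$ setting: without a uniform $L^{\infty}$ bound one cannot work with $L^{\infty}$-valued Young measures, and the commutation relation must be justified using only the $L^{10}$ integrability, so care is needed to ensure the products $\eta_1 q_2-\eta_2 q_1$ are integrable against the limiting measure — this is exactly where the compact support of the entropies and the quintic growth of $q$ interact. A secondary, bookkeeping difficulty is keeping track of the scaling $\beta=\mathcal O(\eps^2)$ throughout the decomposition of $\mathcal L_{\eps,\beta}$, so that every dispersive contribution is classified correctly as either vanishing in $L^2$ or bounded in $L^1$.
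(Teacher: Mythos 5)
Your proposal is correct and follows essentially the same route as the paper: the identical entropy-production decomposition of $\pt\eta(\ueb)+\px q(\ueb)$, compactness via Murat's lemma (Lemma \ref{lm:1}) combined with Schonbek's $L^p$ compensated compactness and the $L^\infty(0,T;L^2\cap L^{10})$ bounds, using \eqref{eq:defuxx} and \eqref{eq:00010} for the third-order terms exactly as the paper does, with only cosmetic differences in the remaining estimates (you exploit boundedness of $u\eta'(u)$ and $u^2\eta'(u)$ from the compact support of $\eta$ together with $\beta/\eps\to0$, where the paper instead uses \eqref{eq:u-l-infty} and H\"older with the $L^6$/$L^4$ bounds; both are valid). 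The one point you state too loosely is the final passage to the limit: the term $9\beta\ueb(\px\ueb)^2$ in the weak formulation is \emph{not} in divergence form and cannot be moved onto derivatives of the test function, but must be killed directly via $\beta\norm{\ueb}_{L^{\infty}}\norm{\px\ueb}^2_{L^2((0,T)\times\R)}\le C_0\beta^{\frac34}\eps^{-1}\le C_0\eps^{\frac12}\to 0$, using \eqref{eq:u-l-infty}, \eqref{eq:l-2} and \eqref{eq:beta-eps} as in the paper's \eqref{eq:54} — an estimate of the same type you already derived, so the omission is merely expository.
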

\begin{proof}

Let $\R^{+} =(0,\infty)$, and let us consider a compactly supported entropy--entropy flux pair $(\eta, q)$. Multiplying \eqref{eq:Iseepsw} by $\eta'(\ueb)$, we have
\begin{align*}
\pt\eta(\ueb) + \px q(\ueb) =&\eps \eta'(\ueb) \pxx\ueb - \beta \eta'(\ueb) \pxxx\ueb\\
&+3\beta\eta'(\ueb)\ueb^2\pxx\ueb -9\beta\eta'(\ueb)\ueb(\px\ueb)^2\\
=& I_{1,\,\eps,\,\beta}+I_{2,\,\eps,\,\beta}+ I_{3,\,\eps,\,\beta} + I_{4,\,\eps,\,\beta}\\
&+I_{5,\,\eps,\,\beta}+I_{6,\,\eps,\,\beta}+I_{7,\,\eps,\,\beta},
\end{align*}
where
\begin{equation}
\begin{split}
\label{eq:12000}
I_{1,\,\eps,\,\beta}&=\px(\eps\eta'(\ueb)\px\ueb),\\
I_{2,\,\eps,\,\beta}&= -\eps\eta''(\ueb)(\px\ueb)^2,\\
I_{3,\,\eps,\,\beta}&= -\px(\beta\eta'(\ueb)\pxx\ueb),\\
I_{4,\,\eps,\,\beta}&= \beta\eta''(\ueb)\px\ueb\pxx\ueb,\\
I_{5,\,\eps,\,\beta}&=\px(3\beta\eta'(\ueb)\ueb^2\px\ueb), \\
I_{6,\,\eps,\,\beta}&=-3\beta\eta''(\ueb)\ueb^2(\px\ueb)^2,\\
I_{7,\,\eps,\,\beta}&=-15\beta\eta'(\ueb)\ueb(\px\ueb)^2.
\end{split}
\end{equation}
We have
\begin{equation*}
\label{eq:H1}
I_{1,\,\eps,\,\beta}\to0 \quad \text{in $H^{-1}((0,T) \times\R),\,T>0$, as $\eps\to 0$.}
\end{equation*}
Thanks to Lemma \ref{lm:l-2},
\begin{align*}
\norm{\eps\eta'(\ueb)\px\ueb}^2_{L^2((0,T)\times\R))}&\leq \norm{\eta'}^2_{L^{\infty}(\R)}\eps ^2\int_{0}^{T} \norm{\px\ueb(s,\cdot)}^2_{L^2(\R)}ds\\
&\leq \norm{\eta'}^2_{L^{\infty}(\R)}\frac{\eps C_{0}}{2} \to 0.
\end{align*}
We claim that
\begin{equation*}
\{I_{2,\,\eps,\,\beta}\}_{\eps,\,\beta >0} \quad\text{is bounded in $L^1((0,T)\times\R),\, T>0$}.
\end{equation*}
Again by Lemma \ref{lm:l-2},
\begin{align*}
\norm{ \eps\eta''(\ueb)(\px\ueb)^2}_{L^1((0,T)\times\R)}& \leq \norm{\eta''}_{L^\infty(\R)}\eps\int_{0}^{T}\norm{\px\ueb(s,\cdot)}^2_{L^2(\R)}ds\\
&\leq\norm{\eta''}_{L^\infty (\R)}\frac{C_0}{2}.
\end{align*}
We have that
\begin{equation*}
I_{3,\,\eps,\,\beta}\to0 \quad \text{in $H^{-1}((0,T) \times\R),\,T>0,$ as $\eps\to 0$.}
\end{equation*}
Thanks to Lemma \ref{lm:stima-l-10},
\begin{align*}
\norm{\beta^2\eta'(\ueb)\pxx\ueb}^2_{L^2((0,T)\times\R))}&\leq \norm{\eta'}^2_{L^{\infty}(\R)}\beta ^2\int_{0}^{T} \norm{\pxx\ueb(s,\cdot)}^2_{L^2(\R)}ds\\
&\leq \norm{\eta'}^2_{L^{\infty}(\R)}C(T)\eps \to 0.
\end{align*}
Let us  show that
\begin{equation*}
\{I_{4,\,\eps,\,\beta}\}_{\eps,\,\beta >0}\quad\text{is bounded in $L^1((0,T)\times\R),\, T>0$}.
\end{equation*}
Again by Lemma \ref{lm:stima-l-10},
\begin{align*}
&\norm{\beta\eta''(\ueb)\px\ueb\pxx\ueb}_{L^1((0,T)\times\R)}\\
&\qquad\leq \norm{\eta''}_{L^{\infty}(\R)}\beta\int_{0}^{T} \norm{\px\ueb(s,\cdot)\pxx\ueb(s,\cdot)}_{L^1(\R)}ds\\
&\qquad\leq \norm{\eta''}_{L^{\infty}(\R)}C(T).
\end{align*}
We claim that
\begin{equation*}
I_{5,\,\eps,\,\beta}\to0 \quad \text{in $H^{-1}((0,T) \times\R),\,T>0,$ as $\eps\to 0$.}
\end{equation*}
Due to \eqref{eq:beta-eps}, \eqref{eq:u-l-infty}, \eqref{eq:u-l-6} and the H\"older inequality,
\begin{align*}
&\norm{3\beta\eta'(\ueb)\ueb^2\px\ueb}^2_{L^2((0,T)\times\R)}\\
&\qquad\leq 9\norm{\eta'}_{L^{\infty}(\R)}\beta^2\int_{0}^{T}\!\!\!\int_{\R}\ueb^4(\px\ueb)^2 dsdx\\
&\qquad\le 9\norm{\eta'}_{L^{\infty}(\R)}\beta^2\norm{\ueb(t,\cdot)}_{L^{\infty}(\R)}\int_{0}^{T}\!\!\!\int_{\R}\ueb^3(\px\ueb)^2 dsdx\\
&\qquad \le C_{0}\norm{\eta'}_{L^{\infty}(\R)}\beta^{\frac{7}{4}}\int_{0}^{T}\!\!\!\int_{\R}\ueb^3(\px\ueb)^2 dsdx\\
&\qquad\le C_{0}\norm{\eta'}_{L^{\infty}(\R)}\frac{\beta^{\frac{5}{4}}\beta^{\frac{1}{2}}\eps}{\eps}\norm{\ueb}^3_{L^6((0,T)\times\R)}\norm{\px\ueb}^2_{L^4((0,T)\times\R)}\\
&\qquad \le C_{0}T^{\frac{1}{2}}\norm{\eta'}_{L^{\infty}(\R)}\eps^{\frac{1}{2}}\to 0.
\end{align*}
We have that
\begin{equation*}
\{I_{6,\,\eps,\,\beta}\}_{\eps,\,\beta >0}\quad\text{is bounded in $L^1((0,T)\times\R),\, T>0$}.
\end{equation*}
Thanks to \eqref{eq:l-2},
\begin{align*}
&\norm{3\beta\eta''(\ueb)\ueb^2(\px\ueb)^2}_{L^1((0,T)\times\R)}\\
&\qquad\leq 3\norm{\eta''}_{L^{\infty}(\R)}\beta\int_{0}^{T}\!\!\!\int_{\R}\ueb^2(\px\ueb)^2dtdx\\
&\qquad \le C_{0} \norm{\eta''}_{L^{\infty}(\R)}.
\end{align*}
We claim that
\begin{equation*}
I_{7,\,\eps,\,\beta}\to0 \quad \text{in $L^1((0,T) \times\R),\,T>0,$ as $\eps\to 0$.}
\end{equation*}
Due to \eqref{eq:beta-eps}, \eqref{eq:l-2} and \eqref{eq:u-l-infty},
\begin{align*}
&\norm{15\beta\eta'(\ueb)\ueb(\px\ueb)^2}_{L^1((0,T)\times\R)}\\
&\qquad \le 15\norm{\eta'}_{L^{\infty}(\R)}\beta\int_{0}^{T}\!\!\!\int_{\R}\vert \ueb\vert (\px\ueb)^2 dsdx\\
&\qquad \le 15\norm{\eta'}_{L^{\infty}(\R)}\beta \norm{\ueb(t,\cdot)}_{L^{\infty}(\R)} \int_{0}^{T}\!\!\!\int_{\R}(\px\ueb)^2 dsdx\\
&\qquad \le C_{0}\norm{\eta'}_{L^{\infty}(\R)}\beta^{\frac{3}{4}}\int_{0}^{T}\!\!\!\int_{\R}(\px\ueb)^2 dsdx\\
&\qquad \le C_{0}\norm{\eta'}_{L^{\infty}(\R)}\eps^{\frac{3}{2}}\int_{0}^{T}\!\!\!\int_{\R}(\px\ueb)^2 dsdx\\
&\qquad \le C_{0}\norm{\eta'}_{L^{\infty}(\R)}\eps^{\frac{1}{2}}\to 0.
\end{align*}

Therefore, Lemma \ref{lm:1} and the $L^p$ compensated compactness  \cite{SC} give \eqref{eq:con1}.

We conclude by proving that $u$ is a distributional solution of \eqref{eq:Ise}.
Let $ \phi\in C^{\infty}(\R^2)$ be a test function with  compact support. We have to prove that
\begin{equation}
\label{eq:k1}
\int_{0}^{\infty}\!\!\!\!\!\int_{\R}\left(u\pt\phi-\frac{3u^5}{5}\px\phi\right)dtdx +\int_{\R}u_{0}(x)\phi(0,x)dx=0.
\end{equation}
We have that
\begin{align*}
\int_{0}^{\infty}\!\!&\!\!\!\int_{\R}\left(u_{\eps_{n}, \beta_{n}}\pt\phi-\frac{3u^5_{\eps_n, \beta_{n}}}{5}\px\phi\right)dtdx +\int_{\R}u_{0,\eps_n,\beta_n}(x)\phi(0,x)dx\\
&+\eps_{n}\int_{0}^{\infty}\!\!\!\!\!\int_{\R}u_{\eps_{n},\beta_{n}}\pxx\phi dtdx + \eps_n\int_{0}^{\infty}u_{0,\eps_{n},\beta_{n}}(x)\pxx\phi(0,x)dx\\
&+ \beta_n\int_{0}^{\infty}\!\!\!\!\int_{\R}u_{\eps_n,\beta_n}\pxxx\phi dt dx + \beta_n\int_{0}^{\infty}u_{0,\eps_n,\beta_n}(x)\pxxx\phi(0,x)dx\\
=&- 3\beta_n \int_{0}^{\infty}\!\!\!\!\int_{\R}u^2_{\eps_n,\beta_n}\pxx u_{\eps_n,\beta_n}\phi dt dx+9\beta\int_{0}^{\infty}\!\!\!\!\int_{\R}u_{\eps_n,\beta_n}\left(\px u_{\eps_n,\beta_n}\right)^2 \phi dt dx\\
=&15 \beta\int_{0}^{\infty}\!\!\!\!\int_{\R}u_{\eps_n,\beta_n}\left(\px u_{\eps_n,\beta_n}\right)^2 \phi dt dx + 3\beta_n \int_{0}^{\infty}\!\!\!\!\int_{\R}u^2_{\eps_n,\beta_n}\px u_{\eps_n,\beta_n}\px\phi\\
=& 15 \beta\int_{0}^{\infty}\!\!\!\!\int_{\R}u_{\eps_n,\beta_n}\left(\px u_{\eps_n,\beta_n}\right)^2 \phi dt dx-\beta_n\int_{0}^{\infty}\!\!\!\!\int_{\R}u^3_{\eps_n,\beta_n}\pxx\phi dtdx.
\end{align*}
Let us show that
\begin{equation}
\label{eq:54}
15 \beta\int_{0}^{\infty}\!\!\!\!\int_{\R}u_{\eps_n,\beta_n}\left(\px u_{\eps_n,\beta_n}\right)^2 \phi dt dx\to 0.
\end{equation}
Due to \eqref{eq:beta-eps}, \eqref{eq:l-2} and \eqref{eq:u-l-infty},
\begin{align*}
&15 \beta\left\vert\int_{0}^{\infty}\!\!\!\!\int_{\R}u_{\eps_n,\beta_n}\left(\px u_{\eps_n,\beta_n}\right)^2 \phi dt dx\right\vert\\
&\quad \le 15\beta\int_{0}^{\infty}\!\!\!\!\int_{\R}\left\vert u_{\eps_n,\beta_n}\right\vert\left(\px u_{\eps_n,\beta_n}\right)^2 \vert\phi\vert dt dx\\
&\quad \le 15\beta\norm{u_{\eps_n,\beta_n}}_{L^{\infty}(\R^{+}\times\R)}\norm{\phi}_{L^{\infty}(\R^{+}\times\R)}\int_{0}^{\infty}\!\!\!\!\int_{\R}\left(\px u_{\eps_n,\beta_n}\right)^2dt dx\\
&\quad \le C_{0}\beta^{\frac{3}{4}}\norm{\phi}_{L^{\infty}(\R^{+}\times\R)}\int_{0}^{\infty}\!\!\!\!\int_{\R}\left(\px u_{\eps_n,\beta_n}\right)^2dt dx\\
&\quad \le C_{0}\eps^{\frac{3}{2}}\norm{\phi}_{L^{\infty}(\R^{+}\times\R)}\int_{0}^{\infty}\!\!\!\!\int_{\R}\left(\px u_{\eps_n,\beta_n}\right)^2dt dx\\
&\quad \le C_{0}\eps^{\frac{1}{2}}\norm{\phi}_{L^{\infty}(\R^{+}\times\R)}\to 0,
\end{align*}
that is \eqref{eq:54}.

We prove that
\begin{equation}
\label{eq:55}
\beta_n\int_{0}^{\infty}\!\!\!\!\int_{\R}u^3_{\eps_n,\beta_n}\pxx\phi dtdx\to 0.
\end{equation}
Thanks to \eqref{eq:u-l-6} and the H\"older inequality,
\begin{align*}
&\beta_n\left\vert\int_{0}^{\infty}\!\!\!\!\int_{\R}u^3_{\eps_n,\beta_n}\pxx\phi dtdx\right\vert\le \beta_{n} \int_{0}^{\infty}\!\!\!\!\int_{\R}\vert u^3_{\eps_n,\beta_n}\vert\vert\pxx\phi\vert dtdx\\
&\qquad\le \beta_{n} \norm{u_{\eps_n,\beta_n}}^3_{L^6(\supp(\pxx\phi))}\norm{\pxx\phi}_{L^2(\supp(\phi))}\\
&\qquad\le \beta_{n}\norm{u_{\eps_n,\beta_n}}^3_{L^6((0,T)\times\R)}\norm{\pxx\phi}_{L^2((0,T)\times\R)}\\
&\qquad\le \beta_{n}\norm{\pxx\phi}_{L^2((0,T)\times\R)}C_{0}T^{\frac{1}{2}}\to0,
\end{align*}
that is \eqref{eq:55}.

Therefore, \eqref{eq:k1} follows from \eqref{eq:u0eps}, \eqref{eq:con1}, \eqref{eq:54} and \eqref{eq:55}.
\end{proof}
Following \cite{LN}, we prove the following result.
\begin{lemma}
\label{lm:entropy-solution}
Assume that \eqref{eq:u-0}, \eqref{eq:u0eps}, and \eqref{eq:beta-eps-1} hold. Then,
\begin{equation}
\label{eq:con3}
 u_{\eps_n, \beta_n}\to u \quad  \textrm{in} \quad  L^{p}_{loc}((0,\infty)\times\R),\quad \textrm{for each} \quad 1\le p <10,\\
\end{equation}
where $u$ is  the unique entropy solution of \eqref{eq:con-law}.
\end{lemma}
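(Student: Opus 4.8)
The plan is to upgrade the distributional solution produced by Lemma~\ref{lm:dist-solution} to the unique entropy solution by establishing the \Kruzkovs entropy inequality in the limit. Since \eqref{eq:beta-eps-1} is stronger than \eqref{eq:beta-eps}, Lemma~\ref{lm:dist-solution} already provides sequences $\eps_n,\beta_n\to0$, a limit $u\in L^\infty(0,T;L^2(\R)\cap L^{10}(\R))$ with the strong convergence \eqref{eq:con3}, and the fact that $u$ solves \eqref{eq:con-law} in the sense of distributions. It therefore remains only to prove that for every convex entropy--entropy flux pair $(\eta,q)$ one has $\pt\eta(u)+\px q(u)\le0$ in $\mathcal{D}'((0,\infty)\times\R)$; \Kruzkovs uniqueness theorem then identifies $u$ as the unique entropy solution.

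First I would fix a convex $(\eta,q)$ with $\eta'$ bounded (these include the \Kruzkov families after smoothing, and by truncation/approximation they suffice to recover the full entropy condition) and, exactly as in Lemma~\ref{lm:dist-solution}, multiply \eqref{eq:Iseepsw} by $\eta'(\ueb)$ to obtain the entropy balance $\pt\eta(\ueb)+\px q(\ueb)=\sum_{j=1}^{7}I_{j,\eps,\beta}$ with the seven terms listed in \eqref{eq:12000}. The decomposition is tailored so that the sign structure is transparent: the two terms coming from the genuine diffusion and from the $\ueb^2\pxx\ueb$ dispersion, namely $I_{2,\eps,\beta}=-\eps\eta''(\ueb)(\px\ueb)^2$ and $I_{6,\eps,\beta}=-3\beta\eta''(\ueb)\ueb^2(\px\ueb)^2$, are \emph{nonpositive} because $\eta''\ge0$, while $I_{1,\eps,\beta}$, $I_{3,\eps,\beta}$, $I_{5,\eps,\beta}$ are exact $x$--derivatives tending to $0$ in $H^{-1}((0,T)\times\R)$ and $I_{7,\eps,\beta}\to0$ in $L^1((0,T)\times\R)$, precisely by the estimates already carried out in the proof of Lemma~\ref{lm:dist-solution}.

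The decisive new point, and the step I expect to be the main obstacle, is the term $I_{4,\eps,\beta}=\beta\eta''(\ueb)\px\ueb\pxx\ueb$. Under the weaker hypothesis \eqref{eq:beta-eps} this term is only bounded in $L^1$, i.e.\ a bounded sequence of measures whose weak-$\star$ limit need not vanish and could therefore destroy the sign of the entropy production. Here the stronger scaling \eqref{eq:beta-eps-1} is used: repeating the H\"older-inequality computation behind \eqref{eq:00010} gives
\[
\norm{I_{4,\eps,\beta}}_{L^1((0,T)\times\R)}\le\norm{\eta''}_{L^\infty(\R)}\,\beta\norm{\px\ueb\,\pxx\ueb}_{L^1((0,T)\times\R)}\le C(T)\frac{\beta}{\eps^2}=C(T)\,\mathcal{O}(\eps^{\alpha})\to0,
\]
so that $I_{4,\eps,\beta}\to0$ in $L^1$ as well. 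Thus every term except the two nonpositive ones vanishes in the limit.

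Finally I would test the balance against an arbitrary $0\le\phi\in C_c^\infty((0,\infty)\times\R)$. On the left-hand side the strong convergence \eqref{eq:con3}, together with $u\in L^\infty(0,T;L^{10}(\R))$ and the fact that $q$ grows at most like $u^5$, lets me pass to the limit in $\eta(\ueb)$ and $q(\ueb)$ (so $\eta(\ueb)\to\eta(u)$ and $q(\ueb)\to q(u)$ in $L^1_{\loc}$), giving $\langle\pt\eta(\ueb)+\px q(\ueb),\phi\rangle\to\langle\pt\eta(u)+\px q(u),\phi\rangle$. On the right-hand side the contributions of $I_{1,\eps,\beta},I_{3,\eps,\beta},I_{5,\eps,\beta}$ and of $I_{4,\eps,\beta},I_{7,\eps,\beta}$ tend to $0$, while $\langle I_{2,\eps,\beta}+I_{6,\eps,\beta},\phi\rangle\le0$ for every $n$. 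Hence $\langle\pt\eta(u)+\px q(u),\phi\rangle\le0$, i.e.\ the entropy inequality holds for every convex $(\eta,q)$. Combined with the distributional-solution property this yields, by \Kruzkovs theorem, that $u$ is the unique entropy solution of \eqref{eq:con-law}; uniqueness of the limit also forces the convergence \eqref{eq:con3} to hold for the whole family, not merely along the subsequence.
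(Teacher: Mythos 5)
Your proposal is correct and follows essentially the same route as the paper: the identical decomposition $I_{1,\eps,\beta},\dots,I_{7,\eps,\beta}$ from \eqref{eq:12000}, with the stronger scaling \eqref{eq:beta-eps-1} deployed exactly where you predict --- to upgrade the boundedness behind \eqref{eq:00010} to $\norm{I_{4,\eps,\beta}}_{L^1((0,T)\times\R)}\le C(T)\,\beta/\eps^2=\mathcal{O}(\eps^{\alpha})\to0$ --- followed by testing the entropy balance against $0\le\phi\in C^{\infty}_{c}$ and passing to the limit using the strong convergence and the a priori bounds. The only minor divergence is that where you retain $I_{6,\eps,\beta}=-3\beta\eta''(\ueb)\ueb^2(\px\ueb)^2$ as a good-signed (nonpositive) term, the paper instead shows it also vanishes in $L^1$, combining \eqref{eq:u-l-4} with the bound on $\eps^2\beta\norm{\px\ueb}^4_{L^4}$ from \eqref{eq:l-10} to get $\norm{I_{6,\eps,\beta}}_{L^1}\le C(T)\beta^{1/2}/\eps=C(T)\eps^{\alpha/2}\to0$; both variants are valid, and your more careful framing of the admissible entropy class (convex $\eta$ with $\eta'$ bounded, approximating the \Kruzkov family) is if anything cleaner than the paper's use of compactly supported pairs at this step.
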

\begin{proof}
Let us consider a compactly supported entropy--entropy flux pair $(\eta, q)$. Multiplying \eqref{eq:Iseepsw} by $\eta'(\ueb)$, we obtain that
\begin{align*}
\pt\eta(\ueb) + \px q(\ueb) =&\eps \eta'(\ueb) \pxx\ueb + \beta \eta'(\ueb) \pxxx\ueb + \\
&+3\beta\eta'(\ueb)\ueb^2\pxx\ueb -9\beta\eta'(\ueb)\ueb(\px\ueb)^2\\
=& I_{1,\,\eps,\,\beta}+I_{2,\,\eps,\,\beta}+ I_{3,\,\eps,\,\beta} + I_{4,\,\eps,\,\beta}\\
&+I_{5,\,\eps,\,\beta}+I_{6,\,\eps,\,\beta}+I_{7,\,\eps,\,\beta},
\end{align*}
where $I_{1,\,\eps,\,\beta},\,I_{2,\,\eps,\,\beta},\, I_{3,\,\eps,\,\beta} ,\, I_{4,\,\eps,\,\beta}$, $I_{5,\,\eps,\,\beta}$, $I_{6,\,\eps,\,\beta}$, $I_{7,\,\eps,\,\beta}$  are defined in \eqref{eq:12000}.

Arguing as \cite[Lemma $3.3$]{Cd2}, we obtain that $I_{1,\,\eps,\,\beta}\to0$ in $H^{-1}((0,T) \times\R)$, $\{I_{2,\,\eps,\,\beta}\}_{\eps,\beta >0}$ is bounded in $L^1((0,T)\times\R)$, $I_{3,\,\eps,\,\beta}\to0$ in $H^{-1}((0,T) \times\R)$, $I_{5,\,\eps,\,\beta}\to0$ in $H^{-1}((0,T) \times\R)$ and $I_{7,\,\eps,\,\beta}\to0$ in $L^1((0,T)\times\R)$.

Let us  show that
\begin{equation*}
I_{4,\,\eps,\,\beta}\to 0\quad\text{in $L^1((0,T)\times\R),\, T>0$ as $\eps\to 0$}.
\end{equation*}
Due to \eqref{eq:beta-eps-1}, \eqref{eq:l-2}, \eqref{eq:l-10}, and the H\"older inequality,
\begin{align*}
&\norm{\beta\eta''(\ueb)\px\ueb\pxx\ueb}_{L^1((0,T)\times\R)}\\
&\qquad\leq \norm{\eta''}_{L^{\infty}(\R)}\beta\int_{0}^{T} \norm{\px\ueb(s,\cdot)\pxx\ueb(s,\cdot)}_{L^1(\R)}ds\\
&\qquad\leq C_{0}\norm{\eta''}_{L^{\infty}(\R)}\eps^{\alpha}\eps^2\int_{0}^{T} \norm{\px\ueb(s,\cdot)\pxx\ueb(s,\cdot)}_{L^1(\R)}ds\\
&\qquad= C_{0}\norm{\eta''}_{L^{\infty}(\R)}\eps^{\alpha}\int_{0}^{T}\!\!\!\int_{\R}\left\vert\eps^{\frac{1}{2}} \px\ueb\right\vert\left\vert\eps^{\frac{3}{2}}\pxx\ueb\right\vert dsdx\\
&\qquad\le C_{0}\norm{\eta''}_{L^{\infty}(\R)}\eps^{\alpha}\eps^{\frac{1}{2}}\norm{\px\ueb}_{L^2((0,T)\times\R)}\eps^{\frac{3}{2}}\norm{\pxx\ueb}_{L^2((0,T)\times\R)}\\
&\qquad \le C(T)\norm{\eta''}_{L^{\infty}(\R)}\eps^{\alpha}\to0.
\end{align*}
We have that
\begin{equation*}
I_{6,\,\eps,\,\beta}\to 0\quad\text{is in $L^1((0,T)\times\R),\, T>0$ as $\eps\to 0$}.
\end{equation*}
Thanks to \eqref{eq:beta-eps-1}, \eqref{eq:u-l-4}, \eqref{eq:l-10} and the H\"older inequality,
\begin{align*}
&\norm{3\beta\eta''(\ueb)\ueb^2(\px\ueb)^2,}_{L^1((0,T)\times\R)}\\
&\qquad\leq 3\norm{\eta''}_{L^{\infty}(\R)}\beta\int_{0}^{T}\!\!\!\int_{\R}\ueb^2(\px\ueb)^2dsdx\\
&\qquad \le 3\norm{\eta''}_{L^{\infty}(\R)}\beta\norm{\ueb}^2_{L^4((0,T)\times\R)}\norm{\px\ueb}^2_{L^4((0,T)\times\R)}.\\
&\qquad \le C_{0}T^{\frac{1}{2}}\norm{\eta''}_{L^{\infty}(\R)}\frac{\beta\eps}{\eps}\norm{\px\ueb}^2_{L^4((0,T)\times\R)}\\
&\qquad\le C(T)\norm{\eta''}_{L^{\infty}(\R)}\frac{\beta^{\frac{1}{2}}}{\eps}\\
&\qquad = C(T)\norm{\eta''}_{L^{\infty}(\R)}\eps^{\frac{\alpha}{2}}\to 0.
\end{align*}
Therefore, Lemma \ref{lm:1} gives \eqref{eq:con3}.

We conclude by proving that $u$ is the unique entropy solution of \eqref{eq:con-law}.
Let us consider a compactly supported entropy--entropy flux pair $(\eta, q)$, and $\phi\in C^{\infty}_{c}((0,\infty)\times\R)$ a non--negative function. We have to prove that
\begin{equation}
\label{eq:u-entropy-solution}
\int_{0}^{\infty}\!\!\!\!\!\int_{\R}(\pt\eta(u)+ \px q(u))\phi dtdx \le0.
\end{equation}
We have that
\begin{align*}
&\int_{0}^{\infty}\!\!\!\!\!\int_{\R}(\px\eta(u_{\eps_{n},\,\beta_{n}})+\px q(u_{\eps_{n},\,\beta_{n}}))\phi dtdx\\
&\qquad=\eps_{n}\int_{0}^{\infty}\!\!\!\!\!\int_{\R}\px(\eta'(u_{\eps_{n},\,\beta_{n}})\px u_{\eps_{n},\,\beta_{n}})\phi dtdx -\eps_{n}\int_{0}^{\infty}\!\!\!\!\!\int_{\R} \eta''(u_{\eps_{n},\,\beta_{n}})(\px u_{\eps_{n},\,\beta_{n}})^2\phi dtdx\\
&\qquad\quad -\beta_{n}\int_{0}^{\infty}\!\!\!\!\!\int_{\R}\px(\eta'(u_{\eps_{n},\,\beta_{n}})\pxx u_{\eps_{n},\,\beta_{n}})\phi dtdx\\
&\qquad\quad +\beta_{n}\int_{0}^{\infty}\!\!\!\!\!\int_{\R}\eta''(u_{\eps_{n},\,\beta_{n}})\px u_{\eps_{n},\,\beta_{n}}\pxx u_{\eps_{n},\,\beta_{n}}\phi dtdx\\
&\qquad\quad +3\beta_{n}\int_{0}^{\infty}\!\!\!\!\!\int_{\R}\px(\eta'(u_{\eps_{n},\,\beta_{n}})u^2_{\eps_{n},\,\beta_{n}}\px u_{\eps_{n},\,\beta_{n}} )\phi dtdx\\
&\qquad\quad -3\beta_{n}\int_{0}^{\infty}\!\!\!\!\!\int_{\R}\eta''(u_{\eps_{n},\,\beta_{n}})u^2_{\eps_{n},\,\beta_{n}}(\px u_{\eps_{n},\,\beta_{n}})^2\phi dtdx\\
&\qquad\quad -15\beta_{n}\int_{0}^{\infty}\!\!\!\!\!\int_{\R}\eta'(u_{\eps_{n},\,\beta_{n}})u_{\eps_{n},\,\beta_{n}}(\px u_{\eps_{n},\,\beta_{n}})^2\phi dtdx\\
&\qquad \le - \eps_{n}\int_{0}^{\infty}\!\!\!\!\!\int_{\R}\eta'(u_{\eps_{n},\,\beta_{n}})\px u_{\eps_{n},\,\beta_{n}}\px\phi dtdx + \beta_{n}\int_{0}^{\infty}\!\!\!\!\!\int_{\R}\eta'(u_{\eps_{n},\,\beta_{n}})\pxx u_{\eps_{n},\,\beta_{n}}\px\phi dtdx\\
&\qquad\quad +\beta_{n}\int_{0}^{\infty}\!\!\!\!\!\int_{\R}\eta''(u_{\eps_{n},\,\beta_{n}})\px u_{\eps_{n},\,\beta_{n}}\pxx u_{\eps_{n},\,\beta_{n}}\phi dtdx\\
&\qquad\quad -3\beta_{n}\int_{0}^{\infty}\!\!\!\!\!\int_{\R}\eta'(u_{\eps_{n},\,\beta_{n}})u^2_{\eps_{n},\,\beta_{n}}\px u_{\eps_{n},\,\beta_{n}}\px\phi dtdx\\
&\qquad\quad-3\beta_{n}\int_{0}^{\infty}\!\!\!\!\!\int_{\R}\eta''(u_{\eps_{n},\,\beta_{n}})u^2_{\eps_{n},\,\beta_{n}}(\px u_{\eps_{n},\,\beta_{n}})^2\phi dtdx\\
&\qquad\quad -15\beta_{n}\int_{0}^{\infty}\!\!\!\!\!\int_{\R}\eta'(u_{\eps_{n},\,\beta_{n}})u_{\eps_{n},\,\beta_{n}}(\px u_{\eps_{n},\,\beta_{n}})^2\phi dtdx\\
&\qquad \le  \eps_{n}\int_{0}^{\infty}\!\!\!\!\!\int_{\R}\vert\eta'(u_{\eps_{n},\,\beta_{n}})\vert\vert\px u_{\eps_{n},\,\beta_{n}}\vert\vert\px\phi\vert dtdx\\
&\qquad\quad+\beta_{n}\int_{0}^{\infty}\!\!\!\!\!\int_{\R}\vert\eta'(u_{\eps_{n},\,\beta_{n}})\vert\vert\pxx u_{\eps_{n},\,\beta_{n}}\vert\vert\px\phi\vert dtdx\\
&\qquad\quad +\beta_{n}\int_{0}^{\infty}\!\!\!\!\!\int_{\R}\vert\eta'(u_{\eps_{n},\,\beta_{n}})\vert\vert\pxx u_{\eps_{n},\,\beta_{n}}\vert\vert\px\phi\vert dtdx\\
&\qquad\quad +3\beta_{n}\int_{0}^{\infty}\!\!\!\!\!\int_{\R}\vert\eta'(u_{\eps_{n},\,\beta_{n}})\vert u^2_{\eps_{n},\,\beta_{n}}\vert\px u_{\eps_{n},\,\beta_{n}}\vert\vert\px\phi\vert dtdx\\
&\qquad\quad+3\beta_{n}\int_{0}^{\infty}\!\!\!\!\!\int_{\R}\vert\eta''(u_{\eps_{n},\,\beta_{n}})\vert u^2_{\eps_{n},\,\beta_{n}}(\px u_{\eps_{n},\,\beta_{n}})^2\vert\phi\vert dtdx\\
&\qquad\quad+ 15\beta_{n}\int_{0}^{\infty}\!\!\!\!\!\int_{\R}\vert\eta'(u_{\eps_{n},\,\beta_{n}})\vert \vert u_{\eps_{n},\,\beta_{n}}\vert(\px u_{\eps_{n},\,\beta_{n}})^2\vert\phi\vert dtdx\\
&\qquad\le  \eps_{n} \norm{\eta'}_{L^{\infty}(\R)}\norm{\px u_{\eps_{n},\,\beta_{n}}}_{L^2(\supp(\px\phi))}\norm{\px\phi}_{L^2(\supp(\px\phi))}\\
&\qquad\quad+ \beta_{n} \norm{\eta'}_{L^{\infty}(\R)}\norm{\pxx u_{\eps_{n},\,\beta_{n}}}_{L^2(\supp(\px\phi))}\norm{\px\phi}_{L^2(\supp(\px\phi))}\\
&\qquad\quad +\beta_{n} \norm{\eta''}_{L^{\infty}(\R)}\norm{\phi}_{L^{\infty}(\R^{+}\times\R)}\norm{\px u_{\eps_{n},\,\beta_{n}}\pxx u_{\eps_{n},\,\beta_{n}}}_{L^1(\supp(\px\phi))}\\
&\qquad\quad +3\beta_{n}\norm{\eta'}_{L^{\infty}(\R)}\int_{0}^{\infty}\!\!\!\!\!\int_{\R} u^2_{\eps_{n},\,\beta_{n}}\vert\px u_{\eps_{n},\,\beta_{n}}\vert\vert\px\phi\vert dtdx\\
&\qquad\quad +3\beta_{n}\norm{\eta''}_{L^{\infty}(\R)} \norm{\phi}_{L^{\infty}(\R^{+}\times\R)}\norm{u^2_{\eps_{n},\,\beta_{n}}(\px u_{\eps_{n},\,\beta_{n}})^2}_{L^1(\supp(\phi))}\\
&\qquad\quad+15\beta_{n}\norm{\eta'}_{L^{\infty}(\R)}\norm{\phi}_{L^{\infty}(\R^{+}\times\R)}\norm{u_{\eps_{n},\,\beta_{n}}(\px u_{\eps_{n},\,\beta_{n}})^2}_{L^1(\supp(\phi))}\\
&\qquad\le  \eps_{n} \norm{\eta'}_{L^{\infty}(\R)}\norm{\px u_{\eps_{n},\,\beta_{n}}}_{L^2((0,T)\times\R)}\norm{\px\phi}_{L^2((0,T)\times\R)}\\
&\qquad\quad+ \beta_{n} \norm{\eta'}_{L^{\infty}(\R)}\norm{\pxx u_{\eps_{n},\,\beta_{n}}}_{L^2((0,T)\times\R)}\norm{\px\phi}_{L^2((0,T)\times\R)}\\
&\qquad\quad+\beta_{n} \norm{\eta''}_{L^{\infty}(\R)}\norm{\phi}_{L^{\infty}(\R^{+}\times\R)}\norm{\px u_{\eps_{n},\,\beta_{n}}\pxx u_{\eps_{n},\,\beta_{n}}}_{L^1((0,T)\times\R)}\\
&\qquad\quad +3\beta_{n}\norm{\eta'}_{L^{\infty}(\R)}\int_{0}^{\infty}\!\!\!\!\!\int_{\R} u^2_{\eps_{n},\,\beta_{n}}\vert\px u_{\eps_{n},\,\beta_{n}}\vert\vert\px\phi\vert dtdx\\
&\qquad\quad+3\beta_{n}\norm{\eta''}_{L^{\infty}(\R)} \norm{\phi}_{L^{\infty}(\R^{+}\times\R)}\norm{u^2_{\eps_{n},\,\beta_{n}}(\px u_{\eps_{n},\,\beta_{n}})^2}_{L^1((0,T)\times\R)}\\
&\qquad\quad +15\beta_{n}\norm{\eta'}_{L^{\infty}(\R)}\norm{\phi}_{L^{\infty}(\R^{+}\times\R)}\norm{u_{\eps_{n},\,\beta_{n}}(\px u_{\eps_{n},\,\beta_{n}})^2}_{L^1((0,T)\times\R)}.
\end{align*}
We show that
\begin{equation}
\label{eq:789}
3\beta_{n}\norm{\eta'}_{L^{\infty}(\R)}\int_{0}^{\infty}\!\!\!\!\!\int_{\R} u^2_{\eps_{n},\,\beta_{n}}\vert\px u_{\eps_{n},\,\beta_{n}}\vert\vert\px\phi\vert dtdx\to 0.
\end{equation}
Due to \eqref{eq:beta-eps-1}, \eqref{eq:l-2}, \eqref{eq:u-l-infty} and the H\"older inequality,
\begin{align*}
&3\beta_{n}\norm{\eta'}_{L^{\infty}(\R)}\int_{0}^{\infty}\!\!\!\!\!\int_{\R} u^2_{\eps_{n},\,\beta_{n}}\vert\px u_{\eps_{n},\,\beta_{n}}\vert\vert\px\phi\vert dtdx\\
&\qquad\le 3\norm{\eta'}_{L^{\infty}(\R)}\beta_{n}\norm{u_{\eps_{n},\,\beta_{n}}(t,\cdot)}^2_{L^{\infty}(\R)}\\
&\qquad\quad\cdot\norm{\px u_{\eps_{n},\,\beta_{n}}}_{L^2(\supp(\px\phi)}\norm{\px\phi}_{L^2(\supp(\phi))}\\
&\qquad \le C_{0}\norm{\eta'}_{L^{\infty}(\R)}\beta^{\frac{1}{2}}_{n}\norm{\px u_{\eps_{n},\,\beta_{n}}}_{L^2((0,T)\times\R)}\norm{\px\phi}_{L^2((0,T)\times\R)}\\
&\qquad\le C_0\norm{\eta'}_{L^{\infty}(\R)}\norm{\px\phi}_{L^2((0,T)\times\R)}\eps^{\frac{\alpha}{2}}_{n}\eps_{n}\norm{\px u_{\eps_{n},\,\beta_{n}}}_{L^2((0,T)\times\R)}\\
&\qquad\le C_{0}\norm{\eta'}_{L^{\infty}(\R)}\norm{\px\phi}_{L^2((0,T)\times\R)}\eps^{\frac{\alpha+1}{2}}_{n}\to0,
\end{align*}
that is \eqref{eq:789}.

We claim that
\begin{equation}
\label{eq:790}
3\beta_{n}\norm{\eta''}_{L^{\infty}(\R)} \norm{\phi}_{L^{\infty}(\R^{+}\times\R)}\norm{u^2_{\eps_{n},\,\beta_{n}}(\px u_{\eps_{n},\,\beta_{n}})^2}_{L^1((0,T)\times\R)}\to 0.
\end{equation}
Due to \eqref{eq:beta-eps-1},  \eqref{eq:l-2} and \eqref{eq:u-l-infty},
\begin{align*}
& 3\beta_{n}\norm{\eta''}_{L^{\infty}(\R)} \norm{\phi}_{L^{\infty}(\R^{+}\times\R)}\int_{0}^{T}\!\!\!\int_{\R} u^2_{\eps_{n},\,\beta_{n}}(\px u_{\eps_{n},\,\beta_{n}})^2dsdx\\
&\qquad\le 3\norm{\eta''}_{L^{\infty}(\R)} \norm{\phi}_{L^{\infty}(\R^{+}\times\R)}\beta_{n}\norm{u_{\eps_{n},\,\beta_{n}}(t,\cdot)}^2_{L^{\infty}(\R)}\\               &\qquad\quad\cdot\int_{0}^{T}\!\!\!\int_{\R} (\px u_{\eps_{n},\,\beta_{n}})^2dsdx\\
&\qquad\le C_{0}\norm{\eta''}_{L^{\infty}(\R)} \norm{\phi}_{L^{\infty}(\R^{+}\times\R)}\beta^{\frac{1}{2}}_{n}\int_{0}^{T}\!\!\!\int_{\R} (\px u_{\eps_{n},\,\beta_{n}})^2dsdx\\
&\qquad\le C_{0}\norm{\eta''}_{L^{\infty}(\R)} \norm{\phi}_{L^{\infty}(\R^{+}\times\R)}\eps^{\frac{\alpha}{2}}_{n}\eps_{n}\int_{0}^{T}\!\!\!\int_{\R} (\px u_{\eps_{n},\,\beta_{n}})^2dsdx\\
&\qquad \le C_{0}\norm{\eta''}_{L^{\infty}(\R)} \norm{\phi}_{L^{\infty}(\R^{+}\times\R)}\eps^{\frac{\alpha}{2}}\to 0,
\end{align*}
that is \eqref{eq:790}.

We have
\begin{equation}
\label{eq:791}
15\beta_{n}\norm{\eta'}_{L^{\infty}(\R)}\norm{\phi}_{L^{\infty}(\R^{+}\times\R)}\norm{u_{\eps_{n},\,\beta_{n}}(\px u_{\eps_{n},\,\beta_{n}})^2}_{L^1((0,T)\times\R)}\to 0.
\end{equation}
Again by \eqref{eq:beta-eps-1},  \eqref{eq:l-2} and \eqref{eq:u-l-infty},
\begin{align*}
&15\beta_{n}\norm{\eta'}_{L^{\infty}(\R)}\norm{\phi}_{L^{\infty}(\R^{+}\times\R)}\int_{0}^{T}\!\!\!\int_{\R}u_{\eps_{n},\,\beta_{n}}(\px u_{\eps_{n},\,\beta_{n}})^2dsdx\\
&\qquad\le 15\norm{\eta'}_{L^{\infty}(\R)}\norm{\phi}_{L^{\infty}(\R^{+}\times\R)}\beta_{n}\norm{u_{\eps_{n},\,\beta_{n}}(t,\cdot)}_{L^{\infty}(\R)}\\
&\qquad\quad\cdot\int_{0}^{T}\!\!\!\int_{\R} (\px u_{\eps_{n},\,\beta_{n}})^2dsdx\\
&\qquad\le C_{0}\norm{\eta'}_{L^{\infty}(\R)}\norm{\phi}_{L^{\infty}(\R^{+}\times\R)}\beta^{\frac{3}{4}}_{n}\int_{0}^{T}\!\!\!\int_{\R} (\px u_{\eps_{n},\,\beta_{n}})^2dsdx\\
&\qquad\le C_{0}\norm{\eta'}_{L^{\infty}(\R)}\norm{\phi}_{L^{\infty}(\R^{+}\times\R)}\eps^{\frac{2+3\alpha}{4}}_{n}\eps_{n}\int_{0}^{T}\!\!\!\int_{\R} (\px u_{\eps_{n},\,\beta_{n}})^2dsdx\\
&\qquad \le C_{0}\norm{\eta'}_{L^{\infty}(\R)}\norm{\phi}_{L^{\infty}(\R^{+}\times\R)}\eps^{\frac{2+3\alpha}{4}}_{n}\to 0,
\end{align*}
that is \eqref{eq:791}.

\eqref{eq:u-entropy-solution} follows from \eqref{eq:beta-eps-1}, \eqref{eq:con3}, \eqref{eq:789}, \eqref{eq:790}, \eqref{eq:791},  Lemmas \ref{lm:l-2} and \ref{lm:stima-l-10}.
\end{proof}
\begin{proof}[Proof of Theorem \ref{th:main}]
Lemma \ref{lm:dist-solution} gives $i)$ and $ii)$, while $iii)$ follows from Lemma \ref{lm:entropy-solution}.
\end{proof}

\end{document}